\title{Localization for Linearly Edge Reinforced Random Walks}
\author{
  Omer Angel\thanks{Supported in part by NSERC} \and
  Nicholas Crawford\thanks{Supported in part at the Technion by a
    Landau fellowship} \and
  Gady Kozma\thanks{Supported in part by the Israel Science Foundation}
}
\date{March 2012}
\newif\iffinal
\crefname{theorem}{Theorem}{Theorems}
\crefname{lemma}{Lemma}{Lemmas}
\crefname{section}{\S}{\S\S}
\crefname{equation}{}{}
  \crefname{theorem}{theorem}{theorems}
  \crefname{lemma}{lemma}{lemmas}
\newtheoremstyle{thm}{1.5ex}{1.5ex}{\itshape\rmfamily}{}
{\bfseries\rmfamily}{}{2ex}{}
\newtheoremstyle{def}{1.5ex}{1.5ex}{\slshape\rmfamily}{}
{\bfseries\rmfamily}{}{2ex}{}
\newtheoremstyle{rem}{1.3ex}{1.3ex}{\rmfamily}{}
{\itshape}
{} {1.5ex}{}
\theoremstyle{thm}
\newtheorem{theorem}{Theorem}
\newtheorem{lemma}[theorem]{Lemma}
\newtheorem{proposition}[theorem]{Proposition}
\newtheorem*{Main Theorem}{Main Theorem.}
\newtheorem{corollary}[theorem]{Corollary}
\newtheorem*{corollary*}{Corollary}
\newtheorem*{special theorem}{Lindeberg-Feller Theorem for Martingales}
\newtheorem*{definition*}{Definition}
\theoremstyle{rem}
\newcommand{\eps}{\varepsilon}
\newcommand{\E}{\mathbb{E}}
\renewcommand{\P}{\mathbb{P}}
\newcommand{\R}{\mathbb R}
\newcommand{\Z}{\mathbb{Z}}
\DeclareMathOperator{\Bin}{Bin}
\DeclareMathOperator{\Exp}{Exp}
\DeclareMathOperator{\Geom}{Geom}
\DeclareMathOperator{\Bern}{Bern}
\DeclareMathOperator{\dist}{dist}
\DeclareMathOperator{\Vol}{Vol}
\renewcommand{\bar}[1]{\overline{#1}}
\newcommand{\eqd}{\stackrel{D}{=}}
\newcommand{\bJ}{\mathbf{J}}
\newcommand{\bW}{\mathbf{W}}
\newcommand{\CC}{\mathcal C}
\newcommand{\DD}{\mathcal D}
\newenvironment{fullwidth}
{\par \setlength{\@totalleftmargin}{0pt}\setlength{\linewidth}{\hsize}%
  \list{}{\setlength{\leftmargin}{0pt}} \item\relax}
{\endlist}
\begin{document}

\maketitle

\begin{abstract}
  We prove that the linearly edge reinforced random walk (LRRW) on any
  graph with bounded degrees is recurrent for sufficiently small initial
  weights.  In contrast, we show that for non-amenable graphs the LRRW is
  transient for sufficiently large initial weights, thereby establishing a
  phase transition for the LRRW on non-amenable graphs.  While we rely on
  the description of the LRRW as a mixture of Markov chains, the proof does
  not use the magic formula.  We also derive analogous results for the
  vertex reinforced jump process.
\end{abstract}

\section{Introduction}
\label{sec:intro}

The {\bf linearly edge reinforced random walk} (LRRW) is a model of a
self-interacting (and hence non-Markovian) random walk, proposed by
Coppersmith and Diaconis, and defined as follows. Each edge $e$ of a graph
$G = (V,E)$ has an initial weight $a_e>0$. A starting vertex $v_0$ is
given. The walker starts at $v_0$. It examines the weights on the edges
around it, normalizes them to be probabilities. and then chooses an edge
with these probabilities. The weight of the edge traversed is then
increased by 1 (the edge is ``reinforced''). The process then repeats with
the new weights.

The process is called \emph{linearly} reinforced because the reinforcement
is linear in the number of steps the edge was crossed. Of course one can
imagine many other reinforcement schemes, and those have been studied (see
e.g.\ \cite{P07} for a survey). Linear reinforcement is special because the
resulting process is \emph{partially exchangeable}. This means that if
$\alpha$ and $\beta$ are two finite paths such that every edge is crossed
exactly the same number of times by $\alpha$ and $\beta$, then they have
the same probability (to be the beginning of an LRRW). Only linear
reinforcement has this property.

Diaconis \& Freedman \cite[Theorem 7]{DF} showed that a \emph{recurrent}
partially exchangeable process is a {\em mixture of Markov chains}. Today
the name {\em random walk in random environment} is more popular than
mixture of Markov chains, but they mean the same thing: that there is some
measure $\mu$ on the space of Markov chains (known as the ``mixing
measure'') such that the process first picks a Markov chain using $\mu$ and
then walks according to this Markov chain.  In particular, this result
applies to the LRRW whenever it is recurrent.  There ``recurrent'' means
that it returns to its starting vertex infinitely often.  We find this
result, despite its simple proof (it follows from de Finetti's theorem for
exchangeable processes, itself not a very difficult theorem) to be quite
deep. Even for LRRW on a graph with three vertices it gives non-trivial
information. For general exchangeable processes recurrence is necessary;
see \cite[Example 19c]{DF} for an example of a partially exchangeable
process which is not a mixture of Markov chains. For LRRW this cannot
happen, it is a mixture of Markov chains even when it is not recurrent (see
\cref{T:DF} below).

On finite graphs, the mixing measure $\mu$ has an explicit expression,
known fondly as the ``magic formula''. See \cite{MOR} for a survey of the
formula and the history of its discovery. During the last decade
significant effort was invested to understand the magic formula, with the
main target the recurrence of the process in two dimensions, a conjecture
dating back to the 80s (see e.g.\ \cite[\S 6]{Pem}).  Notably, Merkl and
Rolles \cite{MR2} showed, for any fixed $a$, that LRRW on certain "dilute"
two dimensional graphs is recurrent, though the amount of dilution needed
increases with $a$.  Their approach did not work for $\Z^2$, but required
stretching each edge of the lattice to a path of length $130$ (or more).
The proof uses the explicit form of the mixing measure, which turns out to
be amenable to entropy arguments.  These methods involve relative entropy
arguments which also lead to the Mermin-Wagner theorem \cite{MW}.  This
connection suggests that the methods should not apply in higher dimension.

An interesting variation on this theme is when each \emph{directed} edge
has a weight. When one crosses an edge one increases only the weight in the
direction one has crossed. This process is also partially exchangeable, and
is also described by a random walk in a random environment. On the plus
side, the environment is now i.i.d., unlike the magic formula which
introduces dependencies between the weights of different edges at all
distances. On the minus side, the Markov chain is not reversible, while the
Markov chains that come out of the undirected case are reversible. These
models are quite different. One of the key (expected) features of LRRW ---
the existence of a phase transition on $\Z^d$ for $d \geq 3$ from
recurrence to transience as $a$ varies --- is absent in the directed model
\cite{Sab,ES02}. In this paper we deal only with the reversible one.

Around 2007 it was noted that the magic formula is remarkably similar to
the formulae that appear in supersymmetric hyperbolic $\sigma$ models which
appear in the study of quantum disordered systems, see Disertori, Spencer
and Zirnbauer \cite{DSZ} and further see Efetov's book \cite{E} for an
account of the utility of supersymmetry in disordered systems. Very
recently, Sabot and Tarr\`es \cite{ST} managed to make this connection
explicit. Since recurrence at high temperature was known for the hyperbolic
$\sigma$ model \cite{DS}, this led to a proof that LRRW is recurrent in any
dimension, when $a$ is sufficiently small (high temperature in the $\sigma$
model corresponds to small $a$). We will return to Sabot and Tarr\`es's work
later in the introduction and describe it in more details. However, our
approach is very different and does not rely on the magic formula in any
way.

Our first result is a general recurrence result:

\begin{theorem}\label{T:recurrence}
  For any $K$ there exists some $a_0>0$ such that if $G$ is a graph with
  all degrees bounded by $K$, then the linearly edge reinforced random walk
  on $G$ with initial weights $a\in(0,a_0)$ is a.s.\ positive recurrent.
\end{theorem}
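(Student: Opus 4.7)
The plan is to combine the mixture-of-Markov-chains representation of \cref{T:DF} (which describes the LRRW, whether or not it is recurrent, as a random walk in a random environment of conductances $(c_e)_{e \in E}$) with direct dynamical estimates on the walk itself, avoiding any explicit form of the mixing measure. Since positive recurrence of a reversible random walk in an environment of conductances on an infinite graph is equivalent to $\sum_e c_e<\infty$, the target is a quantitative decay estimate for $c_e$ in $\dist(v_0,e)$ strong enough to force this summability almost surely when $a$ is small compared with $K$.

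A natural first move is to localize on the finite balls $G_n := B(v_0,n)$ with boundary vertices absorbed. The LRRW on $G_n$ is automatically positive recurrent, so the mixing measure is unambiguous there, and estimates that are uniform in $n$ transfer to $G$ by Rayleigh monotonicity. The core dynamical input I aim for is: for $a$ small depending only on $K$, the expected number of visits of the LRRW to the level set $\partial_k := \{v : \dist(v_0,v) = k\}$ decays at least geometrically in $k$. The heuristic is clean: when the walker arrives at $v \in \partial_k$ having just crossed an edge into $v$, that back-edge already carries weight $\geq 1+a$, whereas the at most $K-1$ edges leading toward $\partial_{k+1}$ carry weight only $a$ if unvisited, so the chance of advancing to $\partial_{k+1}$ on the next step is at most $(K-1)a/(1+Ka) = O(Ka)$.

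Iterating such a bound across levels, and translating expected crossing counts into conductance estimates via the mixture (ratios of crossing counts converge to ratios of conductances on $G_n$), one obtains $\E[c_e] \le C \rho^{\dist(v_0,e)}$ with $\rho = \rho(K,a) < 1/K$ for $a$ small. Combined with $|\partial_k| \le K^k$ this yields $\sum_e \E[c_e] < \infty$ and hence $\sum_e c_e < \infty$ almost surely with respect to the mixing measure; positive recurrence of the random walk in the environment follows, and by the mixture this is exactly positive recurrence of the LRRW on $G_n$, uniformly in $n$. Letting $n \to \infty$ gives the conclusion on $G$.

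The principal difficulty, and where I expect the bulk of the technical work, is making the ``small-$a$ backtracking'' argument uniform in the walker's history. After a single crossing an edge's weight already sits at $1+a$, so a previously explored detour can create an $\Omega(1)$-probability alternative to the direct return toward $v_0$, and the heuristic above looks only at the single step from an edge just freshly crossed. Individual edges cannot be analyzed in isolation; one must control the joint distribution of weights around the current frontier. The natural remedy is a multi-scale coupling to a sub-critical branching process whose offspring distribution is shaped by the degree bound $K$, using partial exchangeability to decorrelate level excursions enough that the per-step $O(Ka)$ advance probability compounds into the geometric decay claimed above. Getting this uniform control, rather than the one-step estimate, is where I expect the real work of the proof to lie.
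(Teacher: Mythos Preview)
Your high-level scaffolding (restrict to finite balls, prove a uniform decay estimate there, pass to the limit via tightness of the mixing measures) matches the paper. The substantive gap is in the decay estimate itself, and there are two concrete problems.

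First, you aim for a bound on $\E[c_e]$. This is almost certainly the wrong moment. The paper explains (via the two-edge P\'olya example) that the weight ratios $W_e/W_{e'}$ along a path have only about $1+a/2$ finite moments; since a telescoping product of such ratios is exactly how one reaches a distant edge from $e_1$, there is no reason to expect $\E[W_e/W_{e_1}]$ to be finite, let alone exponentially small. The paper therefore works with $\E[(W_e/W_{e_1})^s]$ for $s<\tfrac14$; this choice is not cosmetic but essential to the method. Your step ``ratios of crossing counts converge to ratios of conductances, hence $\E[c_e]\le C\rho^{\dist}$'' implicitly exchanges a limit with an expectation that may not be integrable.

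Second, you correctly isolate the real difficulty --- the one-step backtracking bound $(K-1)a/(1+Ka)$ fails once the frontier has been visited before --- but your proposed fix (``multi-scale coupling to a sub-critical branching process \ldots\ using partial exchangeability to decorrelate level excursions'') is a wish rather than a mechanism. The paper's actual device is quite different and rather specific: for each edge $e$ on the backward loop-erasure $\gamma$ of the walk, define an estimator $Q(e)$ for the ratio $W_e/W_{e'}$ from the first few directed crossings at $e^-$, then split
\[
\Big(\tfrac{W_x}{W_{e_1}}\Big)^s=\prod_{e\in\gamma}\Big(\tfrac{R(e)}{Q(e)}\Big)^s\cdot\prod_{e\in\gamma}Q(e)^s
\]
and handle the two products by Cauchy--Schwarz. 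The first product is controlled purely in the RWRE picture (conditioned on $\bW$, the $Q(e)$ along $\gamma$ are independent, and each $R/Q$ has bounded $s$-moment uniformly in $\bW$). The second product is controlled purely in the dynamical picture, via an explicit coupling with i.i.d.\ dominating variables $\bar Q(e)$ whose $s$-moments are $O(a)$; it is here that the ``back-edge already has weight $1+a$'' heuristic is made rigorous, and the restriction to the event $\DD_\gamma$ is what rescues it from the history problem you flagged. Neither half of this split is visible in your proposal, and I do not see how a branching comparison would substitute for the RWRE-side independence that makes the first half work.
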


Positive recurrence here means that the walk asymptotically spends a
positive fraction of time at any given vertex, and has a stationary
distribution.  In fact, the LRRW is equivalent to a random walk in a
certain reversible, dependent random environment (RWRE) as discussed below.
We show that this random environment is a.s.\ positively recurrent.
We formulate and prove this theorem for constant initial weights. However,
our proof also works if the initial weights are unequal as long as for each
edge $e$, the initial weight $a_e$ is at most $a_0$. With minor
modifications, our argument can be adapted to the case of random
independent $a_e$'s with sufficient control of their moments. See the
discussion after the proof of \cref{L:AUB} for details.

Let us stress again that we do not use the explicit form of the magic
formula in the proof. We do use that the process has a reversible RWRE
description, but we do not need any details of the measure. The main
results in \cite{ST} are formulated for the graphs $\Z^d$ but Remark $5$ of
that paper explains how to extend their proof to all bounded degree
graphs. Further, even though \cite{ST} claims only recurrence, positive
recurrence follows from their methods. Thus the main innovation here is the
proof technique.

It goes back to \cite{DF} that in the recurrent case the weights are
uniquely defined after normalizing, say by $W_{v_0}=1$ (see there also an
example of a transient partially exchangeable process where the weights are
not uniquely defined).  Hence with \cref{T:recurrence} the weights are well
defined and we may investigate them.  Our next result is that the weights
decay exponentially in the graph distance from the starting point.  Denote
graph distance by $\dist(\cdot,\cdot)$.  Also, let $\dist(e, v_0)$ denote
the minimal number of edges in any path from $v_0$ to either endpoint of
$e$.

\begin{theorem}\label{T:ExpDecay}
  Let $G$ be a graph with all degrees bounded by $K$ and let $s\in
  (0,\nicefrac14)$. Then there exists $a_0 = a_0(s,K) > 0$ such that for
  all $a\in(0,a_0)$,
  if $e_1$ is the first edge crossed by $X$,
  \begin{equation}\label{E:UB2}
    \E\left( W_e \right)^s
    \leq \E\left(\frac{W_e}{W_{e_1}} \right)^s
    \leq 2K \Big( C(s,K) \sqrt{a} \Big)^{\dist(e,v_0)}.
  \end{equation}
\end{theorem}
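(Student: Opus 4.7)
The first inequality in \eqref{E:UB2} is immediate from the normalization $W_{v_0}=1$, which forces $W_{e_1} \leq W_{v_0} = 1$, so $W_e \leq W_e/W_{e_1}$. The substantive content is the second inequality, which I propose to prove by iterating a one-edge-step estimate along a shortest path from $v_0$ to $e$.

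Fix a sequence of edges $e_1 = f_0, f_1, \dots, f_k = e$ (a geodesic, augmented, if necessary, by the edge $e$ itself at the end) with consecutive edges sharing a vertex, and $k \le \dist(e,v_0)+1$. The factor $2K$ in the bound absorbs an at-most-$2K$ overhead from the choice of endpoint of $e$ and the incident edge used to attach $e$ to the geodesic. Telescoping gives
$$
\frac{W_e}{W_{e_1}} = \prod_{i=0}^{k-1} \frac{W_{f_{i+1}}}{W_{f_i}},
$$
so by the tower property it suffices to establish a single-step bound: for any two adjacent edges $f, f'$ sharing a vertex $v$, and for an appropriate sigma-algebra $\mathcal{G}$ describing the already-explored part of the environment,
$$
\E\!\left[(W_{f'}/W_f)^s \,\big|\, \mathcal{G}\right] \;\leq\; C(s,K)\sqrt{a}.
$$

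The one-step bound should exploit the local Polya-urn structure of LRRW at $v$. Partial exchangeability implies that, conditional on the walk's arrivals at $v$, the sequence of exits from $v$ is distributed as a Polya urn with initial weights $(a,\dots,a)$, so the asymptotic exit-proportions $(W_e/W_v)_{e \ni v}$ have a Dirichlet$(a,\dots,a)$ law (with the initial vector updated for edges that have already been traversed before the first visit to $v$). A direct moment computation for such a small-parameter Dirichlet gives $\E[(W_{f'}/W_v)^{2s}] \leq C(s)\,a$ for $s < \nicefrac14$. Splitting $W_{f'}/W_f = (W_{f'}/W_v)\cdot(W_v/W_f)$ and applying Cauchy-Schwarz converts this $O(a)$ estimate into an $O(\sqrt{a})$ bound, after absorbing a $K$-dependent factor controlling the second moment of $W_v/W_f$ in the favourable case that $f$ is the edge along which the walk first arrived at $v$ (so $W_f$ is not anomalously small).

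The principal obstacle, I expect, is handling the correlations in the LRRW environment, which preclude any naive factorization of the telescoping product. I anticipate resolving this by a restriction-type property: conditional on the trajectory up to a stopping time at which the walk first discovers a small neighborhood of $f_{i+1}$, the continuation is again an LRRW on the unexplored subgraph with updated initial weights, so the Polya-urn/Dirichlet description of the next step holds under the correct conditional measure. Verifying this inductive picture—and in particular ensuring that the auxiliary factor $W_v/W_f$ from the Cauchy-Schwarz step remains controlled uniformly along the iteration, rather than accumulating compounding errors—is the technical heart of the proof.
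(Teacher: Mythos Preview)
Your proposal has a genuine gap at its core: the P\'olya-urn/Dirichlet description of the exit proportions at a vertex $v$ is only valid on trees. On a tree, once the walk leaves $v$ through an edge $e$ it must return through the same edge, so arrivals are determined by exits and the exit sequence is an honest P\'olya urn. On a general graph with cycles this fails: the edge through which the walk re-enters $v$ depends on the environment away from $v$, hence on previous exit choices, so the exits conditioned on arrivals do \emph{not} form a P\'olya urn and the ratios $W_e/W_v$ are not Dirichlet. The ``restriction-type property'' you invoke does not repair this: while the continuation after a stopping time is indeed an LRRW with updated weights, the environment variables $W_e$ are functionals of the \emph{entire} infinite trajectory (they are the mixing measure), not measurable with respect to any finite-time filtration, so there is no $\sigma$-algebra $\mathcal G$ for which both your one-step bound holds and the tower-property iteration makes sense. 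Relatedly, telescoping along a \emph{fixed} geodesic gives no control: nothing prevents $W_{f_i}$ from being atypically small at some step, since the geodesic need not be the path the walk actually used.

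The paper's argument is structurally quite different. It telescopes not along a fixed geodesic but along the random \emph{path of domination} $\gamma$ (the backward loop-erasure of the walk up to the first visit to $e$), and sums over all possible values of $\gamma$. For each edge $e\in\gamma$ it introduces an observable estimator $Q(e)=M_e/M_f$ (a ratio of directed crossing counts at the shared vertex) and writes $W_x/W_{e_1}=\prod R(e)=\prod(R/Q)\cdot\prod Q$. Cauchy--Schwarz then separates two lemmas: the first bounds $\E\prod(R/Q)^{2s}$ using the RWRE viewpoint (given $\bW$, exits from distinct vertices are independent, so the $Q(e)$ along $\gamma$ are conditionally independent and each is close to $R(e)$); the second bounds $\E\prod Q^{2s}\mathbf 1\{\DD_\gamma\}$ using the dynamic viewpoint, via an explicit coupling with i.i.d.\ dominating variables $\bar Q(e)$ whose $s^{\text{th}}$ moment is $\le C(s,K)a$. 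It is precisely this interplay of the two descriptions---RWRE for conditional independence, reinforcement dynamics for the $a$-dependence---that replaces the tree-only P\'olya structure you are relying on.
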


Note that $a_0$ does not depend on the graph except through the maximal
degree. The factor $2K$ on the right-hand side is only relevant, of course,
when $\dist(e,v_0)=0$, otherwise it can be incorporated into the constant
inside the power. 

The parameter $s$ deserves some explanation.  It is interesting to note
that despite the exponential spatial decay, each $W_i$ does not have
arbitrary moments.  Examine, for example, the graph with three vertices and
two edges. The case when the initial edge weights are $2$ and the process
starts from the center vertex is particularly famous as it is equivalent to
the standard P\'olya urn.  In this case the weights are distributed
uniformly on $[0,1]$. This means that the ratio of the weights of the two
edges does not even have a first moment. Of course, this is not quite the
quantity we are interested in, as we are interested in the ratio $W_e/W_f$
where $W_f$ is the first edge crossed. This, though, is the same as an LRRW
with initial weights $a$, $a+1$ and starting from the side. Applying the
magic formula can show that the ratio has $1+a/2$ moments, but not more.
It is also known directly that in this generalized P\'olya urn the weights
have a Beta distribution (we will not do these calculations here, but they
are straightforward).  Hence care is needed with the moments of these
ratios.

Our methods can be easily improved to give a similar bound for $s<\frac13$,
with $\sqrt{a}$ replaced by a suitably smaller power. See the discussion
after \cref{L:AUB} for details. Our proof can probably be modified to give
$\nicefrac{1}{2}$ a moment, and depending on $a$, a bit more. Going beyond
that seems to require a new idea.

The most interesting part of the proof of \cref{T:ExpDecay,T:recurrence} is
to show \cref{T:ExpDecay} given that the process is already known to be
recurrent, and we will present this proof first in \cref{sec:proof}.
\Cref{T:recurrence} then follows easily by approximating the graph with
finite subgraphs where the LRRW is of course recurrent. This is done in
\cref{sec:recurr}.

\subsection{Transience}

An exciting aspect of LRRW is that on some graphs it undergoes a phase
transition in the parameter $a$.  LRRW on trees was analyzed by
Pemantle \cite{Pem}. He showed that there is a critical $a_c$ such that for
initial weights $a<a_c$, the process is positively recurrent, while for
$a>a_c$ it is transient.  We are not aware of another example where a phase
transition was proved, nor even of another case where the process was shown
to be transient for any initial weights.  The proof of Pemantle relies
critically on the tree structure, as in that case, when you know that you
have exited a vertex through a certain edge, you know that you will return
through the very same edge, if you return at all.  This decomposes the
process into a series of i.i.d.\ P\'olya urns, one for
each vertex.  Clearly this kind of analysis can only work on a tree.

The next result will show the existence of a transient regime in the case of
non-amenable graphs. Recall that a graph $G$ is {\bf non-amenable} if for
some $\iota>0$ and any finite set $A \subset G$,
\[
|\partial A| \geq \iota |A|
\]
where $\partial A$ is the external vertex boundary of $A$ i.e.\ $\partial A
= \{x: \dist(x,A) = 1\}$. The largest constant $\iota$ for which this holds
is called the {\bf Cheeger constant} of the graph $G$.

\begin{theorem}\label{T:transience}
  For any $K,c_0>0$ there exists $a_0$ so that the following holds.  Let
  $G$ be a graph with Cheeger constant $\iota \geq c_0$ and degree bound
  $K$.  Then for $a>a_0$ the LRRW on $G$ with all initial weights $a$ on
  all edges is transient.
\end{theorem}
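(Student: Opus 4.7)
The strategy is to compare the LRRW with large initial weight $a$ to simple random walk (SRW) on $G$, exploiting the local closeness of the two processes together with the transience of SRW on non-amenable graphs. By the Cheeger inequality, the SRW kernel on $G$ has spectral radius at most $\rho_0 = \rho_0(c_0, K) < 1$, whence $G_{\mathrm{SRW}}(v_0, v_0) \leq (1-\rho_0)^{-1}$ and the SRW escape probability $\eta = \eta(c_0, K) > 0$ is bounded below uniformly in the starting vertex.

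The main tool is a step-by-step maximal coupling between the LRRW $X$ and an SRW $Y$ starting at $v_0$: whenever both occupy the same vertex $v$, couple them so that they take the same step with probability equal to the overlap of their transition distributions at $v$. If the current LRRW weights at $v$ are $W_e = a + k_e$, this overlap misses by at most $O(K)\max_{e\ni v} k_e/a$, so the processes remain coupled throughout the first $T$ steps with probability at least $1 - O(KT^2/a)$. Choose $R_0, T_0$ depending only on $c_0, K$ so that SRW from any vertex reaches distance $R_0$ within $T_0$ steps and then never returns to the starting vertex, with probability at least $2\eta/3$; the first part follows from positive speed of SRW on non-amenable graphs, and the second from the Green function bound. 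For $a$ large enough that the coupling error over the horizon $T_0$ is at most $\eta/3$, the LRRW inherits this escape event with probability at least $\eta/3$.

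The main obstacle is to upgrade this finite-horizon escape to a statement about infinite-time non-return to $v_0$, since the simple coupling is only effective on a horizon of $O(\sqrt a)$ steps. I would iterate. After the first excursion the walk is at some $v_1$ with $\dist(v_0, v_1) \geq R_0$, and only $\leq T_0$ edges have been perturbed, all lying in a narrow tube around the trajectory. A fresh coupling from $v_1$ then asks the walk to reach a vertex $v_2$ with $\dist(v_0, v_2) \geq 2R_0$ without first returning to distance $\leq R_0/2$ from $v_0$, and so on inductively. At stage $k$ the walk starts at $v_{k-1}$ with $\dist(v_0, v_{k-1}) \geq (k-1)R_0$; the SRW probability of ever hitting $v_0$ from $v_{k-1}$ is at most $\rho_0^{(k-1)R_0}$ by the spectral-radius bound, which via the coupling transfers to the LRRW up to an additive error of order $KT_0^2/a$. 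The cumulative perturbation of the environment after $k$ stages involves only $\leq kT_0$ edges, whereas the walk's current neighborhood contains exponentially many unperturbed edges by non-amenability, so the coupling remains effective at every stage.

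The delicate point in the iteration is that each successive stage must absorb not only the intrinsic SRW backtracking probability $\rho_0^{(k-1)R_0}$ but also the small reinforcement-driven drift of order $O(T_0/a)$ along the already-traversed tube; both are geometrically small in $k$, and are dominated by the outward drift coming from non-amenability for $a \geq a_0(c_0, K)$ sufficiently large. The infinite product of stage-success probabilities is thus bounded below by a positive constant, and on this event the walk never returns to $v_0$, proving transience.
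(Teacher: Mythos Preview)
Your iteration does not close. The per-stage failure probability is not summable in $k$: even granting that the SRW backtracking term $\rho_0^{(k-1)R_0}$ decays geometrically, the coupling discrepancy at stage $k$ does not. During the $T_0$ steps of stage $k$ the walk is reinforcing the very edges it is currently traversing, so the total-variation distance between the LRRW and SRW one-step kernels is of order $T_0/a$ per step, and the probability that the coupling breaks during stage $k$ is of order $T_0^2/a$ --- a positive constant independent of $k$. The infinite product $\prod_k (1 - cT_0^2/a)$ therefore vanishes. Your assertion that the reinforcement-driven error is ``geometrically small in $k$'' is the unsupported step; the appeal to exponentially many unperturbed edges in the current neighbourhood is beside the point, since the relevant perturbation is along the current stage-$k$ segment itself, not the old tube. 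Letting $T_k$ grow with $k$ to make the SRW failure probability summable only makes things worse: once $\sum_{j\le k} T_j$ exceeds $\sqrt{a}$ the cumulative reinforcement is comparable to $a$ and the coupling carries no information. A direct dynamical coupling can thus control at most the first $O(\sqrt{a})$ steps of the LRRW, and your proposal has no mechanism for pushing past that horizon.

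The paper avoids this by passing to the random-environment description: on each finite ball $G_n\subset G$ the LRRW is a mixture over reversible conductance environments $\bW$, and the task becomes showing that $\bW$ is a.s.\ a transient network. Two local lemmas (one argued from the RWRE side, one from the reinforcement side) show that for large $a$, at each vertex $v$ the ratios $W_e/W_v$ are within $1\pm\eps$ of uniform with probability $\ge 1-\delta$, and that the set of such ``$\eps$-good'' vertices stochastically dominates high-density Bernoulli site percolation. After passing to the infinite graph via tightness, a Peierls argument combined with Vir\'ag's anchored-expansion theorem yields a non-amenable subgraph $H$ consisting of good vertices; on $H$ the $\bW$-walk is a bounded perturbation of SRW, so Cheeger's inequality gives exponential heat-kernel decay and hence transience, which Rayleigh monotonicity lifts to all of $(G,\bW)$. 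The essential difference is that working with the environment $\bW$ replaces an infinite-time dynamical problem by a static potential-theoretic one with no time horizon to control.
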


\Cref{T:transience} will be proved in \cref{sec:nonamen}.  As with
\cref{T:recurrence}, our proof works with non-equal initial weights,
provided that $a_e>a_0$ for all $e$.  It is tempting to push for stronger
results by considering graphs $G$ with intermediate expansion properties
such that the simple random walk on $G$ is transient. To put this in
perspective, let us give some examples where LRRW has no transient regime.

\medskip
\noindent {\bf 1.} The \emph{canopy graph} is the graph $\mathbb Z^+$ with
a finite binary tree of depth $n$ attached at vertex $n$. It is often
poetically described as ``an infinite binary tree seen from the leaves''.
Since the process must leave each finite tree eventually, the process on
the ``backbone'' is identical to an LRRW on $\mathbb Z^+$, which is
recurrent for all $a$ (say from \cite{D90} or from the techniques of
\cite{Pem}).

\medskip \noindent {\bf 2.} Let $T$ be the infinite binary tree.  Replace
each edge on the $n^{\textrm{th}}$ level by a path of length $n^2$.  The
random walk on the resulting graph is transient (by a simple resistance
calculation, see e.g.~\cite{Doyle}). Nevertheless, LRRW is recurrent for
any value of $a$. This is because LRRW on $\mathbb Z^+$ has the expected
weights decaying exponentially (again from the techniques of \cite{Pem})
and this decay wins the fight with the exponential growth of the levels.

\medskip \noindent {\bf 3.} These two example can be combined (a stretched
binary tree with finite decorations) to give a transient graph with
exponential growth on which LRRW is recurrent for any $a$.

\medskip\noindent We will not give more details on any of these examples as
that will take us off-topic, but they are all more-or-less easy to do.  The
proof of \cref{T:transience} again uses that the process has a dual
representation as both an self-interacting random walk and as an RWRE.
This might be a good point to reiterate that the LRRW is a mixture of Markov
chains even in the transient case (the counterexample of Diaconis and
Freedman is for a partially exchangeable process, but that process is not a
LRRW).  This was proved by Merkl and Rolles \cite{MR4}, who developed a
tightness result for this purposes which will also be useful for us.  Let
us state a version of their result in the form we will need here along with
the original result of Diaconis and Freedman:

\begin{theorem}[\cite{DF, MR4}]\label{T:DF}
  Fix a finite graph $G=(V,E)$ and initial weights $\mathbf a = (a_e)_{e\in
    E}$ with $a_e>0$. Then $X$ is a mixture of Markov chains in the
  following sense: There exists a unique probability measure $\mu$ on
  $\Omega$ so that
  \[
  \P = \int \P^{\mathbf W} d\mu(\mathbf W)
  \]
  is an identity of measures on infinite paths in $G$. All $W(e)>0$ for all $e$.
  
  Moreover, the $W_v$'s form a tight family of random variables: if we set
  $W_v= \sum_{e \ni v} W_e$ and $a_v =\sum_{e \ni v} a_e$, there are
  contestants $c_1,c_2$ depending only on $a_v, a_e$ so that
  \[
  \mu\left( W_e/W_v \leq \eps \right) \leq c_1 \eps^{a_e/2} \quad
  \text{and} \quad \mu\left( W_e/W_v \geq 1-\eps \right) \leq c_2
  \eps^{(a_v-a_e)/2}.
  \]
\end{theorem}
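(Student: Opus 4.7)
The theorem combines two results from the cited literature, so the plan is to split it and address each in turn. For the existence of $\mu$ in the first half (due to \cite{DF}), the approach is: (i) verify that the LRRW is partially exchangeable, which is a direct unrolling of the definition, since the probability of any finite path depends only on the multiset of edges traversed; (ii) observe that on a finite graph the LRRW is recurrent, either by invoking the general fact that a strictly positive partially exchangeable process on a finite state space returns infinitely often to its start, or by a direct second Borel--Cantelli argument exploiting the linear weight growth at the starting vertex; and (iii) apply Theorem~7 of \cite{DF}, which asserts that any recurrent partially exchangeable process is a mixture of Markov chains. For the LRRW, parametrizing reversible random walks on $G$ by normalized edge conductances $\mathbf{W}=(W_e)_{e\in E}$ (say with $W_{v_0}=1$) gives the desired $\mu$. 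Uniqueness follows because distinct $\mathbf{W}$ under this normalization induce distinct infinite-path laws, and $W_e>0$ a.s.\ because each edge is crossed with positive probability under $\P$.

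For the tightness bounds (from \cite{MR4}), I would invoke the explicit ``magic formula'' for $\mu$ on a finite graph, which produces a density of the schematic form
\[
d\mu(\mathbf{W}) \;\propto\; \Bigl(\prod_{e\in E} W_e^{a_e/2-1}\Bigr)\, D(\mathbf{W})\, d\mathbf{W}
\]
with respect to the natural measure on the simplex of normalized weights, where $D(\mathbf{W})$ is a matrix-tree type polynomial, continuous and strictly positive in the interior. The characteristic Beta-like exponent $a_e/2-1$ per edge (rather than $a_e-1$) arises from the square-root Jacobian native to the magic formula and is consistent with the three-vertex computation in the introduction (initial weight $a=2$ yields uniform mixing, i.e.\ exponent $a/2-1=0$). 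To bound $\mu(W_e/W_v\leq\eps)$, integrate the density over $\{W_e\leq\eps W_v\}$: after freezing the weights of the other edges, the marginal of $W_e$ near zero contributes $\int_0^{\eps W_v} x^{a_e/2-1}\,dx \asymp (\eps W_v)^{a_e/2}$, while $D$ and the remaining marginals are bounded uniformly on the relevant event. The complementary bound $\mu(W_e/W_v\geq 1-\eps)\leq c_2\eps^{(a_v-a_e)/2}$ follows by the same computation applied to $W_v-W_e=\sum_{f\ni v,\,f\neq e} W_f$, whose joint density near the origin carries the combined exponent $(a_v-a_e)/2-1$, producing a Dirichlet-type integral of size $\eps^{(a_v-a_e)/2}$.

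The main obstacle lies in the tightness half, where one must control the prefactor $D$ and the outer integration uniformly. Specifically, one has to verify that the spanning-tree polynomial does not degenerate as a single edge weight tends to zero (which holds provided the graph minus that edge stays connected; exceptional cases can be handled separately) and that the total weight has a Gamma-like tail that is integrable against the remaining density. Merkl and Rolles execute this in \cite{MR4} through a matrix-tree expansion of the determinant combined with direct tail estimates; those arguments apply here verbatim, yielding the claimed exponents $a_e/2$ and $(a_v-a_e)/2$.
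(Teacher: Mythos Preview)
The paper does not supply a proof of this theorem at all: it is stated as a quotation of results from \cite{DF} and \cite{MR4} and used as background input. So there is no ``paper's own proof'' to compare against. Your outline is a reasonable sketch of how the cited sources proceed: the first half via partial exchangeability of the LRRW, recurrence on a finite graph, and then Theorem~7 of \cite{DF}; the second half via the explicit density of the mixing measure on a finite graph (the magic formula, cf.\ \cite{KR,MOR}) and reading off the Beta-like exponents near the boundary of the simplex, which is indeed the approach underlying the bounds in \cite{MR4}. One contextual remark: the present paper stresses that its own arguments avoid the magic formula, but that refers to \cref{T:recurrence,T:ExpDecay,T:transience}; Theorem~\ref{T:DF} is imported from the literature, where the formula is the natural tool, so your use of it here is appropriate.
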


As noted, we mainly need the existence of such a representation of the
LRRW, as well as the tightness of the $W$'s, but not the explicit bounds.

It is natural to conjecture that in fact a phase transition exists on
$\Z^d$ for all $d\ge 3$, analogously to the phase transition for the
supersymmetric $\sigma$ model. What happens in $d=2$? We believe it is
recurrent for all $a$, but dare not guess whether it enjoys a
Kosterlitz-Thouless transition or not.

\subsection{Back to Sabot and Tarr\`es}

The main object of study for Sabot and Tarr\`es \cite{ST} are vertex
reinforced jump processes (VRJP). Unlike LRRW, this is defined in
continuous time process with reinforcement acting through local times of
vertices. One begins with a positive function $\bJ=(J_e)_{e \in E}$ on the
edges of $G$. These are the initial rates for the VRJP process $Y_t$, and
are analogous to the initial weights $a$ of LRRW. Let $(L_x(t))_{x\in V}$
be the local times for $Y$ at time $t$ and vertex $x$. If $Y_t=x$ then $Y$
jumps to a neighbor $y$ with rate $J_{xy}(1+L_y(t))$. See \cite{ST} for
history and additional reference for this process.

The VRJP shares a key property with the LRRW: a certain form of partial
exchangeability after applying a certain time change. This suggests that it
too has a RWRE description (Diaconis and Freedman \cite{DF} only consider
discrete time processes, but their ideas should carry over). Such a RWRE
description exists was found by Sabot and Tarr\`es by other methods. The
existence (though not the formula) for such a form is fundamental for our
proof. Their main results are the following: First, the law of LRRW with
initial weight $a$ is identical to the time-discretization of $Y_t$ when
$\bJ$ is i.i.d.\ with marginal distribution $\Gamma(a, 1)$. Secondly, after
a time change, $Y_t$ is identical to a mixture of continuous-time
reversible Markov chains.  Moreover, the mixing measure is {\em exactly}
the marginal of the supersymmetric hyperbolic $\sigma$ models studied in
\cite{DS, DSZ}. This is analogous to the magic formula for the
LRRW. Finally, as already mentioned, this allowed them to harness the
techniques of \cite{DS, DSZ} to prove that LRRW is recurrent for small $a$
in all dimensions.

Since the VRJP has both a dynamic and an RWRE representation, our methods
apply to this model too. Thus we show:

\begin{theorem}\label{T:VRJPrecurr}
  Let $G$ be a fixed graph with degree bound $K$.  Let $\bJ= (J_{e})_{e \in
    E}$ be a family of independent initial rates with
  \[
  \E J^{1/5} < c(K),
  \]
  where $c(K)$ is a constant depending only on $K$.  Then (a.s.\ with
  respect to $\bJ$), $Y_t$ is recurrent.
\end{theorem}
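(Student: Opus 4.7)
My plan is to adapt the strategy of \cref{T:recurrence} to the VRJP, exploiting the fact that after the Sabot--Tarr\`es time change the VRJP admits an RWRE representation as a mixture of reversible continuous-time Markov chains, together with a single-edge tightness estimate for its mixing measure analogous to \cref{T:DF}. The architecture mirrors \cref{sec:proof,sec:recurr}: for a fixed realization of $\bJ$ and a finite subgraph $G_n\Subset G$ I would first prove a quenched exponential-decay estimate on the stationary conductances $\widehat W_e$ under the VRJP mixing measure, then average over $\bJ$ using H\"older to obtain an annealed bound, and finally take $n\to\infty$ and conclude positive recurrence from summability of the conductances along shells.

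For the quenched step I would rerun the argument leading to \cref{T:ExpDecay} essentially verbatim. That argument does not invoke the magic formula itself; it only uses reversibility of the environment and a one-edge regularity estimate of the type in \cref{T:DF}, both of which are available for the VRJP on any finite graph. The key recursion, which in the LRRW case produces the factor $C(s,K)\sqrt{a}$ per unit of graph distance, should yield, for $s\in(0,\tfrac14)$, an analogous quenched bound of the form
\[
\E_\mu^{\bJ}\bigl(\widehat W_e/\widehat W_{e_1}\bigr)^s \leq 2K\prod_{f\in\gamma}\bigl(C(s,K)\,J_f^{\alpha(s)}\bigr),
\]
where $\gamma$ is a shortest path from $v_0$ to $e$ and $\alpha(s)$ is the VRJP analog of the $\tfrac12$ exponent hidden in \cref{T:ExpDecay}. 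Averaging over the independent $J_f$ along $\gamma$ factorizes the right-hand side, and H\"older's inequality (trading $s$ against $\alpha(s)$ against the number of edges in $\gamma$) reduces the global decay condition to smallness of a single scalar moment $\E J^{1/5}$; the constant $c(K)$ in the theorem is then chosen so that the per-edge factor $C(s,K)(\E J^{1/5})^{\beta}$ beats the degree bound $K$ and produces strict exponential decay in $\dist(e,v_0)$.

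With the annealed exponential decay in hand, a Borel--Cantelli argument along graph distance shells, combined with the tightness of $\widehat W_v/\widehat W_{v_0}$ analogous to \cref{T:DF}, forces the stationary measure of the mixing chain on $G$ to be summable almost surely in $\bJ$, and hence the limiting reversible chain is positive recurrent; recurrence of $Y_t$ itself follows since the continuous-time process and its discrete skeleton share recurrence. The principal obstacle is the quenched step: one must pinpoint the precise algebraic identity inside the VRJP conditional structure that, in the LRRW argument, produces the $\sqrt{a}$ factor, and verify that in the VRJP setting it yields a factor in $J_f$ of exactly the right order so that a $\tfrac15$-moment assumption --- and no stronger one --- suffices. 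A secondary and easier point is to check that the quenched bounds are uniform enough in $n$ that the exhaustion argument of \cref{sec:recurr} goes through even when $J$ has only an $L^{1/5}$ tail.
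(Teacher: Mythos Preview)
Your outline matches the paper's approach: decompose along the path of domination, bound $(R/Q)^{2s}$ via the RWRE picture and $Q^{2s}$ via the dynamics, combine by Cauchy--Schwarz, sum over paths, average over $\bJ$, then pass to the infinite graph and apply Borel--Cantelli. Two points need correction.

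First, the VRJP environment is a \emph{vertex} function $W_v$ with $W_{v_0}=1$, not an edge function; the conductances for the discretized $Z$ are $C_{ij}=J_{ij}W_iW_j$, and the target quantity is $\E W_v^s$, not a ratio $\widehat W_e/\widehat W_{e_1}$. There is no ``first edge'' normalization here.

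Second, and more substantively, the dynamical lemma cannot be ``rerun essentially verbatim'': \cref{L:AUB} is tied to the discrete LRRW reinforcement rule and its Bernoulli couplings. For VRJP the correct estimator is built from local times, $Q_{ij}=\sqrt{\tau_{ji}/\tau_{ij}}$, where $\tau_{ij}$ is the $Z$-local time at $i$ up to the first $i\to j$ jump. On the event $\DD_\gamma$ the $Y$-local time $U:=L_i(T_{ij})$ is exactly $\Exp(J_{ij})$ (since $j$ has not yet been visited, the jump rate to $j$ is constant), while $V:=L_j(T_{ji})$ is stochastically dominated by $\Exp\bigl(J_{ij}(1+U)\bigr)$; these dominating variables can be taken independent across edges of $\gamma$. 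The time change $\tau=2L+L^2$ then reduces the dynamical lemma to the elementary moment bound
\[
\E\left(\frac{2V+V^2}{2U+U^2}\right)^s \le \frac{C}{1-4s}\,J^{2s},
\]
valid for $s<\tfrac14$. This is the ``precise algebraic identity'' you flag as the principal obstacle, and the $\tfrac14$ threshold (whence the convenient choice $s=\tfrac15$) comes from needing $\E X^{-4s}<\infty$ for $X\eqd\Exp(1)$. The VRJP argument is in fact \emph{simpler} than the LRRW one: each edge is handled in isolation with no interference from other edges at the same vertex, and no coupling with auxiliary Bernoulli sequences is needed. Finally, the ``one-edge regularity estimate of the type in \cref{T:DF}'' plays no role in the decay bound itself; only the existence of the finite-volume mixing measure is used.
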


In particular this holds for any fixed, sufficiently small $\bJ$. We
formulate this result for random $\bJ$ because of the relation to LRRW
explained above --- we could have also proved the LRRW results for random
$a$ but we do not see a clear motivation to do so. We will prove this in
\cref{S:VRJP}, where we will also give more precise information about the
dependency of $c$ on $K$.

Next we wish to present a result on exponential decay of the weights, an
analogue of \cref{T:ExpDecay}. To make the statement more meaningful, let
us describe the RWRE (which, we remind, is not $Y_t$ but a time change of
it). We are given a random function $W$ on the \emph{vertices} of our graph
$G$. The process is then a standard continuous-time random walk which moves
from vertex $i$ to vertex $j$ with rate $\frac12 J_{ij}W_j/W_i$. The result
is now:

\begin{theorem}\label{T:VRJP}
  If the $J_e$ are independent, with $\E J_e^{1/5} < c(K)$ then for a.e.\
  $\bJ$, the infinite volume mixing measure exists and under the joint
  measure, for any vertex $v \in G$,
  \[
  \E W_v^{1/5} < 2K^{-4\dist(v_0,v)}.
  \]
\end{theorem}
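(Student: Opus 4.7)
The plan is to mirror the strategy used for Theorem \ref{T:ExpDecay}, exploiting the fact that VRJP admits both a dynamic and a RWRE description (the latter after the Sabot--Tarr\`es time change). By Theorem \ref{T:VRJPrecurr}, the process is a.s.\ recurrent under the moment hypothesis $\E J_e^{1/5} < c(K)$, so on any finite exhaustion $G_n \uparrow G$ the mixing measure is a well-defined law on vertex weights $W$ normalised so that $W_{v_0}=1$, and the time-changed walk moves from $i$ to $j$ at rate $\tfrac12 J_{ij}W_j/W_i$. We work first in finite volume, establish a uniform moment bound that decays exponentially in the graph distance, and then pass to the infinite-volume limit using tightness.

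The core of the argument is a one-step recursive inequality. Let $u$ be a neighbor of $v$ on a geodesic from $v_0$ to $v$. Conditioning on everything in the environment except the local structure around the edge $uv$, local reversibility for the time-changed VRJP expresses $W_v$ as a function of $J_{uv}$, $W_u$, and the contributions of the other neighbors of $v$. Taking the $1/5$-moment, integrating against the independent law of $J_{uv}$, and using the degree bound together with $\E J^{1/5} < c(K)$ (with $c(K)$ tuned sufficiently small) yields
\[
\E W_v^{1/5} \leq K^{-4}\,\E W_u^{1/5}.
\]
The exponent $1/5$ leaves just enough slack to absorb both the $1/5$-moment of $J$ and the Beta-type tail of the marginal of $W_u/W_v$ coming from the mixing measure; the factor $K^{-4}$ per step is exactly what the choice of $c(K)$ is engineered to produce, playing the role of $C(s,K)\sqrt{a}$ in Theorem \ref{T:ExpDecay}.

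Iterating this estimate along a geodesic of length $\dist(v_0,v)$, starting from $W_{v_0}=1$, gives the uniform bound $\E_{G_n} W_v^{1/5} \leq 2 K^{-4\dist(v_0,v)}$, the prefactor $2$ absorbing the base case. To produce the infinite-volume mixing measure, the tightness of the ratios $W_u/W_v$ supplied by the VRJP analogue of Theorem \ref{T:DF} (due to Merkl--Rolles and Sabot--Tarr\`es) yields a weakly convergent subsequence of the finite-volume mixing measures $\mu_n$. Any weak limit is a mixing measure for the infinite-volume process, and the moment bound transfers to the limit by Fatou's lemma, proving both existence and the claimed decay. The main obstacle, as in the proof of Theorem \ref{T:ExpDecay}, is the one-step inequality itself: weights at different vertices are strongly correlated under the mixing measure, so one cannot naively average $W_v$ against the independent rate $J_{uv}$, and the conditional decoupling has to be carried out with care — this is what fixes both the value of $c(K)$ and the moment $1/5$.
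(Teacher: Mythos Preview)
Your proposal has a genuine gap: the one-step recursion $\E W_v^{1/5} \leq K^{-4}\,\E W_u^{1/5}$ along a geodesic is asserted but not proved, and the mechanism you sketch (``local reversibility \dots\ expresses $W_v$ as a function of $J_{uv}$, $W_u$, and the contributions of the other neighbors of $v$'') does not exist in any usable form. The mixing measure for VRJP is a global object; there is no local identity that writes $W_v/W_u$ as a function of $J_{uv}$ and quantities whose moments are already controlled. Your own final paragraph correctly identifies the obstacle --- the $W$'s at different vertices are strongly correlated --- and then does not resolve it; ``the conditional decoupling has to be carried out with care'' is a description of the difficulty, not a solution. Iterating along a \emph{fixed} geodesic also throws away the one piece of structure that makes a decoupling possible.

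The paper's route is quite different and supplies exactly the missing idea. One does not compare $W_v$ to $W_u$ directly. Instead, one introduces an \emph{estimator} $Q_{ij}=\sqrt{\tau_{ji}/\tau_{ij}}$ for the true ratio $R_{ij}=W_j/W_i$, built from the local times $\tau_{ij}$ at $i$ before the first jump $i\to j$. Along the random \emph{path of domination} $\gamma$ (the backward loop erasure to $v$, not a geodesic), the telescoping product $W_v=\prod_{e\in\gamma}R_e$ is split via Cauchy--Schwarz into $\prod(R_e/Q_e)^{2s}$ and $\prod Q_e^{2s}\mathbf 1\{\DD_\gamma\}$. The first factor is handled in the RWRE picture: conditioned on $\bW$, each $\tau_{ij}$ is exponential with rate $\tfrac12 J_{ij}W_j/W_i$, the factors are independent across edges, and a $\Gamma$-function identity gives $\E^{\bW}\prod(R_e/Q_e)^{2s}=(\pi s/\sin\pi s)^{|\gamma|}$. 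The second factor is handled in the dynamic VRJP picture: on $\DD_\gamma$, the time at $i$ before the first jump to $j$ is $\Exp(J_{ij})$ and the return time is stochastically dominated by an exponential with larger rate, independently across edges of $\gamma$; a short moment computation yields $\E\prod Q_e^{2s}\mathbf 1\{\DD_\gamma\}\le C(s)^{|\gamma|}\prod_{e\in\gamma}J_e^{2s}$. Only \emph{after} this does one average over the independent $J_e$'s and sum over all simple paths $\gamma$ from $v_0$ to $v$, which is where the $K^{-4}$ per step and the factor $2$ come from. The finite-to-infinite-volume passage you describe is correct, but the heart of the argument --- the estimator $Q$ and the two complementary lemmas exploiting the two descriptions of the process --- is absent from your proposal.
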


In particular, this implies that the time-discretization of $Y$ is
positively recurrent, and not just recurrent.  For any $s<\nicefrac15$,
bounds on $\E J^s$ can yield the an estimate on $\E W_v^s$ and
\cref{T:VRJPrecurr}.

It is interesting to note that the proof of \cref{T:VRJPrecurr,T:VRJP} are
simpler than that of \cref{T:recurrence,T:ExpDecay}, even though the
techniques were developed to tackle LRRW. The reader will note that for
VRJP, each edge can be handled without interference from adjoining edges on
the same vertex, halving the length of the proof. Is there some inherent
reason? Is VRJP (or the supersymmetric $\sigma$ model) more basic in some
sense? We are not sure.

\subsection{Notations}

In this paper, $G$ will always denote a graph with bounded degrees, and $K$ a bound on these degrees. The set of edges of $G$ will be denoted by $E$. $a_e$ will always denote the initial weights, and when the notation $a$ is used, it is implicitly assumed that $a_e=a$ for all edges $e\in E$.

Let us define the LRRW again, this time using the notations we will use for
the proofs. Suppose we have constructed the first $k$ steps of the walk,
$x_0,\dotsc,x_k$. For each edge $e \in E$, let
\[
N_k(e) = |\{j < k: e=\langle X_j, X_{j+1} \rangle\}|
\]
be the number of times that the {\em undirected} edge $e$ has been
traversed up to time $k$. Then each edge $e$ incident on $X_k$ is used for
the next step with probability proportional to $a + N_k(e)$,  that is,
If $X_k=v$ then 
\[
\P(X_{k+1} = w | X_0,\dots,X_k) = \frac{a+N_k((v,w))}{d_v a + N_k(v)} 
\mathbf 1\{v \sim w\},
\]
where $d_v$ is the degree of $v$; where $N_k(v)$ denotes the sum of
$N_k(e)$ over edges incident to $v$; and where $\sim$ is the neighborhood
relation i.e.\ $v\sim w\iff \langle v,w\rangle\in E$.  It is crucial to
remember that $N_k$ counts traversals of the undirected edges. We stress
this now because at some points in the proof we will count oriented traversals
of certain edges.

While all graphs we use are undirected, it is sometimes convenient to think
of edges as being directed. Each edge $e$ has a head $e^-$ and tail
$e^+$. The reverse of the edge is denoted $e^{-1}$. A path of length $n$ in
$G$ may then be defined as a sequence of edges $e_1,\dots,e_n$ such that
$e_i^+ = e_{i+1}^-$ for all $i$. Vice versa, if $v$ and $w$ are two
vertices, $\langle v,w\rangle$ will denote the edge whose two vertices are
$v$ and $w$.

By $\bW$ we will denote a function from $E$ to $[0,\infty)$ (``the
weights''). We will denote $W_e$ instead of $\bW(e)$ and for a vertex $v$
we will denote
\[
W_v:=\sum_{e\ni v}W_e.
\]
The space of all such $\bW$ will be denoted by $\Omega$ and measures on it
typically by $\mu$.  The $\mu$ which describes our process (whether unique
or not) will be called ``the mixing measure''.

Given $\bW$ we define a Markov process with law $\P^\bW$ on the vertices of
$G$ as follows.  The probability to transition from $v$ to $w$, denoted
$\P^\bW(v,w)$ is $W_{\langle v,w\rangle}/W_v$. For a given $\mu$ on
$\Omega$, the RWRE corresponding to $\mu$ is a process on the vertices of
$G$ given by
\[
\P(X_0=v_0,\dotsc,X_n=v_n)=\int \prod_{i=1}^{n}\mathbb P^\bW(X_{i-1},X_i)\,d\mu(\bW) .
\]
This process will always be denoted by $X$.

\begin{definition*}
  A process is recurrent if it returns to every vertex infinitely many
  times.
\end{definition*}

This is the most convenient definition of recurrence for us. It is formally
different from the definition of \cite{DF} we quoted earlier, but by the
results of \cite{MR4} quoted above they are in fact equivalent for LRRW.

We also use the following standard notation: for two vertices $v$ and $w$
we define $\dist(v,w)$ as the length of the shortest path connecting them
(or 0 if $v=w$). For an edge $e$ we define
$\dist(v,e)=\min\{\dist(v,e^-),\dist(v,e^+)\}$. For a set of vertices $A$
we define $\partial A=\{x:\dist(x,A)=1\}$ i.e.\ the external vertex
boundary. By $X\eqd Y$ we denote that the variables $X$ and $Y$ have the
same distribution. $\Bern(p)$ will denote a Bernoulli variable with
probability $p$ to take the value 1 and $1-p$ to take the value 0 and
$\Exp(\lambda)$ will denote an exponential random variable with expectation
$1/\lambda$. We will denote constants whose precise value is not so
important by $c$ and $C$, where $c$ will be used for constants sufficiently
small, and $C$ for constants sufficiently large. The value of $c$ and $C$
(even if they are not absolute but depend on parameters, such as $C(K)$)
might change from one place to another. By $x\asymp y$ we denote $cx\le
y\le Cx$.

\subsection{Acknowledgements}

We wish to thank Tom Spencer for suggesting to us that a result such as
\cref{T:ExpDecay} might be true. Part of this work was performed during the
XV Brazilian school of probability in Mambucaba. We wish to thank the
organizers of the school for their hospitality.

\section{Proof of \texorpdfstring{\cref{T:ExpDecay}}{Theorem \ref{T:ExpDecay}}}\label{sec:proof}

In this section we give the most interesting part of the proof of
\cref{T:ExpDecay}, showing exponential decay {\em assuming a priori that
  the process is recurrent}. We give an upper bound which only depends on
the maximal degree $K$. In the next section use apply this result to a
sequence of finite approximations to $G$ to prove recurrence for the whole
graph and complete the proof.

Before we begin, we need to introduce a few notations.  For any edge $e =
(e^-,e^+)$ that is traversed by the walk, let $e'$ be the edge through
which $e^-$ is first reached. In particular, $e'$ is traversed before
$e$. If $e$ is traversed before its inverse $e^{-1}$, then $e'$ is distinct
from $e$ as an undirected edge. Iterating this construction to get
$e'',e'''$, etc.\ yields a path $\gamma = \{\dots, e'',e',e\}$ starting
with the first edge used from $v_0$, and terminating with $e$. We call
$\gamma$ the {\bf path of domination} of $e$. This path is either a simple
path, or a simple loop in the case that $e^+$ is the starting vertex
$v_0$. In the former case $\gamma$ is the backwards loop erasure of 
the LRRW.  All edges in the path are traversed before their corresponding
inverses. Let $\DD_\gamma$ be the event that the deterministic path
$\gamma$ is the path of domination corresponding the final edge of
$\gamma$.

For an edge $e$ with $e'\neq e^{-1}$, let $Q(e)$ be an estimate for
$W_e/W_{e'}$ defined as follows: If $e$ is crossed before
$f:=(e')^{-1}$ then set $M_e$ to be the number of times $e$ is crossed
before $f$, and set $M_f=1$.  If $f$ is crossed before $e$ then set $M_f$
to be the number of times $f$ is crossed before $e$, and set $M_e=1$. In
both cases we count crossing \emph{as directed edges}.  In other words, we
only count crossings that start from $e^-$, the common vertex of $e$ and
$f$ (in which case the walker chooses between them).  Then
\[
  Q(e) := \frac{M_e}{M_f}
\]
is our estimate for $W_e/W_{e'}$.  Thus to find $Q(e)$ we wait
until the LRRW has left $e^-$ along both $e$ and $f$, and take the ratio of
the number of departures along $e$ and along $f$ at that time. Note again that we
do not include transitions along $e^{-1}$ or $f^{-1}=e'$, so that by
definition one of the two numbers $M_e$ and $M_f$ must be $1$.

With $Q$ defined we can start the calculation. Recall that $e_1$ is the
first edge crossed by the walk. Suppose $x$ is some edge of $G$ and we want
to estimate $W_x/W_{e_1}$. We fix $x$ for the reminder of the proof. Let
$\Gamma=\Gamma_x$ denote the set of possible values for the path of
domination i.e.\ the set of simple paths or loops whose first edge is one
of the edges coming out of $v_0$ and whose last edge is either $x$ or
$x^{-1}$ (depending on which is crossed first).

Split the probability space according to the value of the path of domination:
\[
\E \left[ \left( \frac{W_x}{W_{e_1}} \right)^s \right]
= \sum_{\gamma \in \Gamma} \E \left[ \left( \frac{W_x}{W_{e_1}} \right)^s
  \mathbf 1\{\DD_{\gamma}\} \right].
\]
Naturally, under $\DD_\gamma$, $e_1$ must be the first edge of $\gamma$. We
remind the reader that we assume {\em a priori} that our process is
recurrent. This has two implications: first, $x$ will a.s.\ be visited
eventually, and so the path of domination is well-defined. Second, the
weights are unique, so $W_x/W_{e_1}$ is a well-defined variable on our
probability space.

Given the weights $W$, let $R$ be the actual ratios along the path of
domination, so for $e\in\gamma\setminus\{e_1\}$,
\[
R(e)= \frac{W_e}{W_{e'}}.
\]
On the event $\DD_{\gamma}$ for $\gamma$ fixed, we may telescope
$W_x/W_{e_1}$ via 
\[
\frac{W_x}{W_{e_1}}
= \prod_{e \in \gamma\setminus e_1} R(e)
= \prod_{e \in \gamma\setminus e_1}\frac{R(e)}{Q(e)}
  \prod_{e \in \gamma\setminus e_1} Q(e).
\]
An application of the Cauchy-Schwarz inequality then gives
\begin{equation}
\label{E:CS1}
\E\left[\left(\frac{W_x}{W_{e_1}}\right)^s \mathbf 1\{\DD_{\gamma}\}\right]
\leq 
\E \left[ \prod_{e \in \gamma\setminus e_1} \left( \frac{R(e)}{Q(e)}
  \right)^{2s} \mathbf 1 \{\DD_{\gamma}\}\right]^{\frac12}  
\E \left[ \prod_{e \in \gamma\setminus e_1} Q(e)^{2s} \mathbf 1
  \{\DD_{\gamma}\} \right]^{\frac 12}.
\end{equation}
\cref{T:ExpDecay} then essentially boils down to the following two lemmas.

\begin{lemma}\label{L:RUB}
  For any graph $G$, any starting vertex $v_0$ and any $a \in (0, \infty)$
  such that the LRRW on $G$ starting from $v_0$ with initial weights $a$ is
  recurrent, for any edge $x \in E$, any $\gamma\in\Gamma_x$ and any $s \in
  (0, 1)$,
  \[
  \E \left[ \prod_{e \in \gamma\setminus e_1} \left( \frac{R(e)}{Q(e)}
    \right)^s \mathbf 1 \{\DD_{\gamma}\} \right]
  \leq C(s)^{|\gamma|-1}
  \]
  where $C(s)$ is some constant that depends on $s$ but not on $G$, $v_0$,
  $a$ or anything else.
\end{lemma}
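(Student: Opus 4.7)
The plan is to condition on the environment $\bW$ and reduce to a single-edge uniform bound via conditional independence. Once $\bW$ is fixed, the walk is a Markov chain, and one may realize all its transitions by pre-sampled i.i.d.\ sequences $\xi^v_1,\xi^v_2,\ldots$ at each vertex $v$, with $\P(\xi^v_k = w \mid \bW) = W_{\langle v,w\rangle}/W_v$, consumed in order on successive visits to $v$ (recurrence guarantees each sequence is fully used). The quantity $Q(e_i)$ is determined solely by the outgoing choices at $v_{i-1} := e_i^-$ restricted to $\{e_i, f_i\}$ with $f_i = e_{i-1}^{-1}$; equivalently, it is a function of $\xi^{v_{i-1}}$ alone. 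Since $\gamma$ is a simple path, the vertices $v_0,\ldots,v_{n-1}$ are distinct, so the sequences $\xi^{v_0},\ldots,\xi^{v_{n-1}}$ are conditionally independent, and hence so are $Q(e_2),\ldots,Q(e_n)$. The ratios $R(e_i) = W_{e_i}/W_{e_{i-1}}$ are of course $\bW$-measurable.

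Using $\mathbf 1_{\DD_\gamma} \le 1$ together with the nonnegativity of $(R/Q)^s$, and then the conditional independence above,
\[
\E\!\left[\prod_{i=2}^{n}\left(\tfrac{R(e_i)}{Q(e_i)}\right)^s \mathbf 1_{\DD_\gamma}\right] \;\le\; \E\!\left[\prod_{i=2}^{n} \E\!\left[\left(\tfrac{R(e_i)}{Q(e_i)}\right)^s \,\Big|\, \bW\right]\right],
\]
so the lemma reduces to a single-edge uniform bound $\E[(R(e)/Q(e))^s \mid \bW] \le C(s)$, holding for all $\bW$ (or equivalently for all values of the local parameter $p = W_e/(W_e+W_f) \in (0,1)$).

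For the single-edge bound, the Bernoulli$(p)$ sequence obtained by restricting the choices at $v_{i-1}$ to $\{e,f\}$ yields $\P(Q=k \mid \bW) = p^k(1-p)$ and $\P(Q=1/k \mid \bW) = (1-p)^k p$ for $k \ge 2$, together with $\P(Q=1 \mid \bW) = 2p(1-p)$. With $R = p/(1-p)$, the expectation $\E[R^s Q^{-s} \mid \bW]$ splits into three terms, the $Q=1$ contribution being $2p^{1+s}(1-p)^{1-s} \le 2$, and the other two being
\[
p^{s}(1-p)^{1-s}\sum_{k\ge 2} k^{-s} p^k \quad \text{and} \quad p^{1+s}(1-p)^{-s}\sum_{k\ge 2} k^s(1-p)^k.
\]
The polylogarithm asymptotic $\sum_{k\ge 1} k^{-s} p^k \sim \Gamma(1-s)(1-p)^{s-1}$ as $p \to 1$ (and its symmetric version $\sum_k k^s q^k \sim \Gamma(1+s)/(1-q)^{1+s}$ as $q \to 1$) precisely cancels the two singularities that $R^s = (p/(1-p))^s$ introduces near $p=1$ and $p=0$ respectively; away from the endpoints each series is bounded termwise.

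I expect this uniform-in-$p$ estimate to be the main (only) obstacle: the factor $R^s$ diverges as $p \to 1$ and is rescued only by the matching divergence of $\sum k^{-s} p^k$, so the two singular regimes must be handled separately. Once the one-parameter bound $\E[(R/Q)^s \mid \bW] \le C(s)$ is in place, the Markovian independence structure together with the trivial inequality $\mathbf 1_{\DD_\gamma} \le 1$ makes the full product bound $C(s)^{|\gamma|-1}$ immediate, with no role played by the details of the event $\DD_\gamma$ itself.
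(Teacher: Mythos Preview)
Your proof is correct and essentially identical to the paper's: condition on $\bW$, realize the walk via i.i.d.\ exit sequences at each vertex, use simplicity of $\gamma$ to get conditional independence of the $Q(e_i)$, drop $\mathbf 1\{\DD_\gamma\}$, and reduce to the uniform-in-$p$ single-edge moment bound. The one point the paper makes more explicit is that $Q(e)$ as defined involves the \emph{random} predecessor $e'$, so it is not literally a function of $\xi^{v_{i-1}}$ alone; the paper therefore first replaces $(R,Q)$ by $(R_\gamma,Q_\gamma)$ defined via the fixed predecessor in $\gamma$ (these agree on $\DD_\gamma$) and only then drops the indicator---your phrase ``with $f_i = e_{i-1}^{-1}$'' performs this replacement implicitly.
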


\begin{lemma}\label{L:AUB}
  For any graph $G$ with degrees bounded by $K$, any starting vertex $v_0$
  and any $a \in (0, \infty)$ such that LRRW on $G$ starting from $v_0$
  with initial weights $a$ is recurrent, for any edge $x \in G$, any
  $\gamma\in\Gamma_x$ and any $s \in (0,\nicefrac12)$,
  \[
  \E\bigg[ \prod_{e\in\gamma\setminus e_1} Q(e)^s \mathbf 1\{\DD_\gamma\}
  \bigg] \leq \bigg[C(s,K) a \bigg]^{|\gamma|-1},
  \]
  where $C(s,K)$ is a constant depending only on $s,K$.
\end{lemma}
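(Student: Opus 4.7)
The strategy is to use the RWRE representation of the LRRW to gain conditional independence of the $Q(e)$'s across different edges of $\gamma$, combined with an urn computation for each individual edge. Under the mixing representation $\P = \int \P^\bW \, d\mu(\bW)$, the walk becomes, conditional on $\bW$, a reversible Markov chain $\P^\bW$. For each $e \in \gamma \setminus e_1$ the subsequence of the walker's choices at the vertex $e^-$, restricted to the two options $\{e, f\}$ with $f = (e')^{-1}$, is i.i.d.\ $\Bern(p_e)$ with $p_e = W_e/(W_e + W_f)$. Since $(M_e, M_f)$, and hence $Q(e) = M_e/M_f$, depend only on this Bernoulli sequence up to the first time both symbols appear, a direct computation yields
\[
\E^\bW[Q(e)^s] = (1-p_e)\sum_{k \geq 1} k^s p_e^k + p_e \sum_{k \geq 1} k^{-s}(1-p_e)^k,
\]
which for $s \in (0, \nicefrac12)$ is controlled by $C(s)\bigl(p_e^s + (1-p_e)^{-s}\bigr)$ up to lower-order terms.

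Because $\gamma$ is simple, the vertices $\{e^- : e \in \gamma\setminus e_1\}$ are distinct and the $\Bern(p_e)$ sequences at these vertices are mutually independent under $\P^\bW$; consequently the $Q(e)$'s are conditionally independent given $\bW$. To incorporate the indicator $\mathbf 1\{\DD_\gamma\}$, I would decompose using the strong Markov property at the successive first-hitting times $\tau_i$ of the vertices $e_i^+$. On $\DD_\gamma$ the walker arrives at $e_{i+1}^- = e_i^+$ for the first time exactly at $\tau_i$ via the edge $e_i$, and the walk from $\tau_i$ onward is, by strong Markov, an independent Markov chain starting at $e_{i+1}^-$ with the current weights. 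Crucially, at the moment $\tau_i$ the weights at $e_{i+1}^-$ are $(W_{e_{i+1}}, W_{f_{i+1}}) = (a, a+1)$, with all other incident edges still at their initial weight $a$, since $\DD_\gamma$ prevents any prior crossing of the edges incident to $e_{i+1}^-$.

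Combining these ingredients, one arrives at an upper bound of the form
\[
\E\Bigl[\prod_{e \in \gamma \setminus e_1} Q(e)^s \mathbf 1\{\DD_\gamma\}\Bigr] \leq \int \prod_{e \in \gamma \setminus e_1} \E^\bW\bigl[Q(e)^s \mathbf 1\{B_e\}\bigr] \, d\mu(\bW),
\]
where $B_e$ is the event ``the restarted Markov chain at $e^-$ first hits $e^+$ via $e$''. Each per-edge factor is bounded by $C(s,K)a$ using (a) the explicit urn formula for $\E^\bW[Q(e)^s]$; (b) the fact that on $B_e$ the first $\{e,f\}$-choice at $e^-$ is forced to be $e$, contributing a factor $p_e \approx a/(2a+1)$ from the initial weight ratio; and (c) the Merkl--Rolles tightness estimate (\cref{T:DF}) to control the regime where $p_e$ is atypically close to $0$ or $1$. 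The product over the $|\gamma|-1$ non-initial edges then yields the claimed $(C(s,K) a)^{|\gamma|-1}$.

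The main obstacle will be the third step: carefully propagating the strong Markov decomposition of $\mathbf 1\{\DD_\gamma\}$ through the expectation of the product of $Q(e)^s$'s, despite $Q(e)$ not being measurable with respect to any natural stopping time of the walk (it depends on the full future visits to $e^-$). This is enabled by the observation that $Q(e)$ depends only on the $\{e,f\}$ Bernoulli sub-sequence at $e^-$, which under $\P^\bW$ is an independent source of randomness from the sub-sequences at other vertices and from the trajectory of the walk between the $\tau_i$'s.
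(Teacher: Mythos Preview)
Your approach has a fundamental gap: you cannot extract the factor $a$ from the quenched expectation $\E^\bW[Q(e)^s \mathbf 1\{B_e\}]$. Once the environment $\bW$ is fixed, the parameter $a$ no longer appears anywhere --- the law $\P^\bW$ is a Markov chain determined entirely by $\bW$, and both $Q(e)$ and $B_e$ are functionals of this chain. So the claimed per-edge bound $\E^\bW[Q(e)^s \mathbf 1\{B_e\}]\le C(s,K)a$ cannot hold pointwise in $\bW$; at best one could hope for such a bound \emph{after} integrating over $\mu$, but then the product structure is lost because the ratios $p_e=W_e/(W_e+W_f)$ along $\gamma$ are not independent under $\mu$. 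The confusion is already visible in the sentence ``at the moment $\tau_i$ the weights at $e_{i+1}^-$ are $(W_{e_{i+1}},W_{f_{i+1}})=(a,a+1)$'': the left side refers to the static RWRE environment $\bW$, while the right side refers to the dynamic reinforcement weights $a+N_k(e)$. These live in different descriptions of the process and are never equal. (It is also not true that $\DD_\gamma$ forces the first $\{e,f\}$-exit from $e^-$ to be along $e$; the walker may backtrack along $f$ any number of times before ever using $e$, so step (b) does not yield the factor you want either.)

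The paper uses precisely the opposite viewpoint for this lemma: the \emph{reinforcement} dynamics, not the RWRE. For each $e\in\gamma\setminus e_1$ it constructs, via an explicit coupling, i.i.d.\ random variables $\bar Q(e)$ built from Bernoulli sequences $Y_j=\Bern\big(a/(j+1+2a)\big)$ and $Y'_j=\Bern\big((1+a)/(2j+1+Ka)\big)$ that dominate $Q(e)$ on the event $\DD_\gamma$. The factor $a$ then emerges directly from these reinforcement transition probabilities --- e.g.\ the chance of exiting along a never-used edge of weight $a$ --- giving $\E\bar Q(e)^s\le C(s,K)a$ and hence the product bound. The RWRE viewpoint is reserved for \cref{L:RUB}, where the environment weights genuinely appear in the statement via $R(e)$; indeed the paper emphasizes that both viewpoints are essential, each for its own lemma.
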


In these two lemmas, it is not difficult to make $C(s)$ and $C(s,K)$
explicit. following the proof gives $C(s) = O(\frac{1}{1-s})$ and $C(s,K) =
O(\frac{K}{1-2s} + \frac1s)$.  However, there seems to be little reason to
be interested in the $s$-dependency. We will apply the lemmas with some
fixed $s$, say $\nicefrac14$.  We do not have particularly strong feelings
about the $K$-dependency either.  It is worth noting that \cref{L:RUB} is
proved by using the random environment point of view on the LRRW, while
\cref{L:AUB} is proved by considering the reinforcements.  Thus both views
are central to our proof.  We note that \cref{T:ExpDecay} can be extended
to $s<\nicefrac13$ by simply using H\"older's inequality instead of
Cauchy-Schwartz.  Relaxing the limit on $s$ in \cref{L:RUB} may allow any
$s<\nicefrac12$, but that is the limit of our approach, since the
$\nicefrac12$ in \cref{L:AUB} is best possible.

\begin{proof}[Proof of \cref{L:RUB}]
  For this lemma the RWRE point of view is used, as it must, since the
  weights $W$ appear in the statement, via $R$. Our first step is to throw
  away the event $\mathbf 1\{\DD_\gamma\}$ i.e.\ to write
  \begin{equation}\label{eq:throwDgam}
    \E \left[ \prod_{e \in \gamma\setminus e_1} \left( \frac{R(e)}{Q(e)}
      \right)^s \mathbf 1 \{\DD_{\gamma}\} \right]
    \leq 
    \E \left[ \prod_{e \in \gamma\setminus e_1} \left(
        \frac{R_\gamma(e)}{Q_\gamma(e)} 
      \right)^s \right]
  \end{equation}
  where the terms on the right-hand side are as follows: $R_\gamma(e)$ is
  the ratio between $W_e$ and $W_f$ where $f$ is the predecessor of $e$ in
  $\gamma$; and $Q_\gamma(e)$ is defined by following the process until
  both $e$ and $f$ are crossed at least once from $e^-$ and then define
  $M_e$, $M_f$ and $Q$ according to these crossings. Clearly, under
  $\DD_\gamma$ both definitions are the same so \eqref{eq:throwDgam} is
  justified.

  This step seems rather wasteful, as heuristically one can expect to lose
  a factor of $K^{|\gamma|}$ from simply ignoring a condition like
  $\DD_\gamma$. But because our eventual result (\cref{T:ExpDecay}) has a
  $C(s,K)^{|\gamma|}$ term, this will not matter.  Since $\gamma$ is fixed,
  from this point until the end of the proof of the lemma we will denote
  $R=R_\gamma$ and $Q=Q_\gamma$.

  At this point, and until the end of the lemma, we fix one realization of the
  weights $\bW$ and condition on it being chosen. This conditioning makes
  the $R(e)$ just numbers, while the $Q(e)$ become independent. Indeed,
  given $W$, the random walk in the random environment can be constructed
  by associating with each vertex $v$ a sequence $Z^v_n$ of i.i.d.\ edges
  incident to $v$ with law $W_e/\sum_{e\ni v} W_e$. If the walk is
  recurrent, then $v$ is reached infinitely often, and the entire sequence
  is used in the construction of the walk. If we fix an edge $e$ and let
  $f=e'^{-1}$, then $M_e$ (resp.\ $M_f$) is the number of appearances of
  $e$ (resp.\ $f$) in the sequence $\{Z^v\}$ up to the first time that $e$
  and $f$ have both appeared. As a consequence, since the sequences for
  different vertices are independent, we get that conditioned on the
  environment $\bW$, the estimates $Q(e)$ for $W_e/W_f$ for pairs
  incident to different vertices are all independent.

  Thus to prove our lemma it suffices to show that for any two edges $e,f$
  leaving some vertex $v$, with $M_e$ and $M_f$ defined as above we have
  \[
  \E \left[\left( \frac{W_e}{W_f} \frac{M_f}{M_e} \right)^s \bigg| \mathbf
    W\right] \leq C(s).
  \]
  We now show that this holds uniformly in the environment.

  First, observe that entries other than $e,f$ in the sequence of i.i.d.\
  edges at $v$ have no effect on the law of $M_e$ and $M_f$, so we may
  assume w.l.o.g.\ that $e$ and $f$ are the only edges coming out of
  $e^-$. Denote the probability of $e$ by $p$ and of $f$ by $q=1-p$ (for
  some $p\in(0,1)$). Now we have for $n\geq1$, that the probability that
  $e$ appears $n$ times before $f$ is $p^n q$, and similarly for $f$ before
  $e$ with the roles of $p$ and $q$ reversed. Thus
  \[
  \E \left[\left( \frac{W_e}{W_f} \frac{M_f}{M_e} \right)^s
    \bigg| \mathbf W\right]
  = \underbrace{\left(\frac{p}{q}\right)^s}_{W_e/W_f} 
  \bigg[ \underbrace{\sum_{k \geq 1} k^s p q^k}_{f \text{ first}}
  + \underbrace{\sum_{k \geq 1} k^{-s} q p^k}_{e \text{ first}} \bigg] .
  \]
  It is a straightforward calculation to see that this is bounded for
  $|s|<1$. The first term is the $s$-moment of a $\Geom(p)$ random variable,
  which is of order $p^{-s} q$, and with the pre-factor comes to
  $q^{1-s}$. The second term is the $(-s)$-moment of a $\Geom(q)$ random
  variable, which is of order $p q^s$, and with the pre-factor gives
  $p^{s+1}$. Thus
  \[
  \E \left[\left( \frac{W_e}{W_f} \frac{M_f}{M_e} \right)^s
    \bigg| \mathbf W\right]
  \asymp q^{1-s} + p^{1+s} \leq C(s)         
  \]
(recall that we assumed $s\in(0,1)$). This finishes the lemma.
\end{proof}

Now we move on to the proof of \cref{L:AUB}.

\begin{proof}[Proof of \cref{L:AUB}]
  Fix a path $\gamma$. We shall construct a coupling of the LRRW together
  with a collection of i.i.d.\ random variables $\bar{Q}(e)$, associated
  with the edges of $\gamma$ (except $e_1$) such that on the event
  $\DD_\gamma$, for every edge $e\in\gamma\setminus e_1$ we have $Q(e)\leq
  \bar{Q}(e)$, and such that for $s<\nicefrac12$,
  \[
  \E \bar{Q}(e)^s \leq C(s,K) a.
  \]
  The claim would follows immediately, because
  \begin{align*}
    \E \prod Q(e)^s\mathbf 1\{\DD_\gamma\}
    &\le \E \prod \bar Q(e)^s\mathbf 1\{\DD_\gamma\} \le
    \E\prod \bar Q(e)^s=\prod \E \bar Q(e)^s\\
    &\le\prod C(s,K)a=\big(C(s,K)a\big)^{|\gamma|-1}.
  \end{align*}
  The remarks in the previous proof about ``waste'' are just as applicable
  here, since we also, in the second inequality, threw away the event
  $\DD_\gamma$.  Note that we cannot start by eliminating the restriction,
  since we only prove $Q\leq\bar{Q}$ on the event $\DD_\gamma$.

  Let us first describe the random variables $\bar{Q}$. Estimating their
  moments is then a straightforward exercise.  Next we will construct the
  coupling, and finally we shall verify that $Q(e)\leq \bar{Q}(e)$.  For an
  edge $e=(e^-,e^+)$ of $\gamma$, we construct two sequences of Bernoulli
  random variables (both implicitly depending on $e$).  For $j\geq0$,
  consider Bernoulli random variables
  \begin{align*}
    Y_j &= \Bern\left(\frac{a}{j+1+2a}\right),  &
    Y'_j &= \Bern\left(\frac{1+a}{2j+1+Ka}\right).
  \end{align*}
  where $\Bern(p)$ is a random variable that takes the value 1 with
  probability $p$ and 0 with probability $1-p$.  All $Y$ and $Y'$ variables
  are independent of each other and of those associated with other edges in
  $\gamma$. In the context of the event $\DD_\gamma$, we think of $Y_0'$ as
  the event that decides which of $e$ and $f$ is crossed first. For $j\ge
  1$, think about $Y_j$ as telling us whether on the $j^\textrm{th}$ visit
  to $e^-$  we depart along $e$ and $Y'_j$ telling us whether we depart
  along $f=e'^{-1}$. This leads to the definition
  \[
  \bar{Q} = \bar{M}_e / \bar{M}_f,
  \]
  where
  \begin{align*}
    \bar{M}_e &= \min\{j\geq 1 : Y'_j=1\}
    \qquad \text{and} \qquad      \bar{M}_f = 1, &
    \text{if $Y'_0=0$}, \\
    \bar{M}_f &= \min\{j\geq 1 : Y_j=1\}
    \qquad \text{and} \qquad     \bar{M}_e = 1, &
    \text{if $Y'_0=1$}. \\
  \end{align*}
\noindent{\bf Moment estimation.} To estimate $\E \bar{Q}^s$ we note
  \[
  \P(Y'_0=0, \bar{M}_e = n)
  = \frac{(K-1)a}{1+Ka}\frac{1+a}{2n+1+Ka} \prod_{j=1}^{n-1}
  \left(1-\frac{1+a}{2j+1+Ka} \right).
  \]
  The first two terms we estimate by
  \[
  \frac{(K-1)a}{1+Ka}\frac{1+a}{2n+1+Ka}\le
  \frac{Ka}{1+Ka}\frac{1+Ka}{2n}=\frac{Ka}{2n}
  \]
  while for the product we note that for any $a>0$,
  \[
  \frac{1+a}{2j+1+Ka}\ge \min\left\{\frac{1}{2j+1},\frac{1}{K}\right\}.
  \]
  Putting these together we get 
    \begin{align*}
      \P(Y'_0=0, \bar{M}_e = n)&\le \frac{Ka}{2n}
    \prod_{j=1}^{n-1}
    \exp\left(-\frac{1}{2j}+O(j^{-2})\right)
    = \frac{Ka}{2n} \exp\left(-\frac{1}{2}\log(n)+O(1)\right) \\
    &\leq C(K) a n^{-3/2}.
  \end{align*}
  Thus for $s<\nicefrac12$,
  \begin{align*}
    \E \Big[ \bar{Q}(e)^s \mathbf 1\{Y'_0=0\} \Big]
    &\leq \sum_{n\geq 1} n^s \P(Y'_0=0, \bar{M}_e = n) \\
    &\leq \sum_{n\geq 1} C(K) a n^{s-3/2} 
    \leq C(s,K) a.
  \end{align*}
  (This is the main place where the assumption $s<\nicefrac12$ is used.)
  
  For the case $Y_0'=1$ we write
  \[
  \P(Y'_0=1, \bar{M}(f) = n) \leq \P(Y_n=1) \leq \frac{a}{n},
  \]
  and so
  \[
  \E \Big[ \bar{Q}(e)^s \mathbf 1\{Y'_0=1\} \Big] = \sum_{n\geq 1} a n^{-1-s} <
  C(s) a.
  \]
  Together we find $\E \bar{Q}(e)^s \leq C(s,K) a$ as claimed.
  
  \paragraph*{The coupling.}
  Here we use the linear reinforcement point of view of the walk.  We
  consider the Bernoulli variables as already given, and construct the LRRW
  as a function of them (and some additional randomness).  Suppose we have
  already constructed the first $t$ steps of the LRRW, are at some vertex
  $v$, and need to select the next edge of the walk. There are several
  cases to consider.

  \begin{description}
  \item[Case 0.] $v\notin\gamma$: In this case we choose the next edge
    according to the reinforced edge weights, independently of all the $Y$
    and $Y'$ variables.
  
  \item[Case 1.] $v\in\gamma$ and the LRRW so far is not consistent with
    $\DD_\gamma$: We may safely disregard the $Y$ variables, as nothing is
    claimed in this case. This case occurs if for some edge $e\in\gamma$ is
    traversed only after its inverse is, or if the first arrival to $e^-$
    was not through the edge preceding $e$ in $\gamma$.

  \item[Case 2.] $v\in\gamma$, and $Q$ is already determined: If $v=e^-$
    for $e\in\gamma$, and both $e$ and $(e')^{-1}$ have both already been
    traversed, then $Q(e)$ is determined by the path so far. Again, we
    disregard the $Y$ variables.

    \begin{fullwidth}
      For the remaining cases, we may assume that $\DD_\gamma$ is
      consistent with the LRRW path so far, and that $v=e^-$ for
      $e\in\gamma\setminus e_1$. As before, let $f=(e')^{-1}$.  If we are
      not in Case 2, then one of $e,f$ has not yet been traversed.
    \end{fullwidth}
  
  \item[Case 3.] This is the first arrival to $v$. In this case the weights
    of all edges incident to $v$ are still $a$, except for $f$ which has
    weight $1+a$. Thus the probability of exiting along $f$ is
    \[
    \frac{1+a}{1+d_v a} \geq \frac{1+a}{1+Ka}.
    \]
    where as usual $d_v$ is the degree of the vertex $v$.
    Thus we can couple the LRRW step so that if $Y'_0=1$, then the walk
    exits along $f$ (and occasionally also if $Y'_0=0$).

  \item[Case 4.] Later visits to $v$ when $Y'_0=0$.  Suppose this is the
    $n^{\textrm{th}}$ visit to $v$.  The weight of $f$ is at least $1+a$,
    since we first entered $v$ through $f^{-1}$.  The total weight of edges
    at $v$ is $2n-1+d_v a$. Thus the probability of the LRRW exiting
    through $f$ is
    \[
    \frac{N_t(f)+a}{2n-1+d_v a} \geq \frac{1+a}{2n-1+Ka}
    \]
    where $t$ is the time of this visit to $v$, and $N_t(f)$ is, as usual,
    the number of crossings of the edge $f$ up to time $t$.  Thus we can
    couple the LRRW step so that if $Y'_{n-1}=1$, then the walk exits along
    $f$ (and occasionally also if $Y'_{n-1}=0$).

  \item[Case 5.] Later visits to $v$ with $Y'_0=1$.  Here, $f$ was
    traversed on the first visit to $v$.  Since we are not in Case 2, $e$
    has not yet been traversed.  Since we are not in Case 1, neither has
    $e^{-1}$, and so $e$ still has weight $a$.  In this case, we first
    decide with appropriate probabilities, and independent of the $Y$,
    whether to use one of $\{e,f\}$, or another edge from $v$.  If we
    decide to use another edge, we ignore the $Y$ variables.  If we decide
    to use one of $\{e,f\}$, and this is the $n^\textrm{th}$ time this
    occurs, then $N_t(f)\geq n$ (since we had only chosen $f$ so far in
    these cases, and also used $f^{-1}$ at least once). Thus the
    probability that we select $e$ from $\{e,f\}$ is
    \[
    \frac{a}{N_t(f)+2a} \leq \frac{a}{n+2a}.
    \]
    Thus we can couple the LRRW step so that if $Y_{n-1}=0$, then we select
    $f$.
  \end{description}

  \paragraph*{Domination of $Q(e)$.}
  We now check that on $\DD_\gamma$ we have $Q(e)\leq \bar{Q}(e)$.  Assume
  $\DD_\gamma$ occurs.  When $Y'_0=0$ the coupling considers only the $Y'$
  variables, one at each visit to $v$ until $f$ is used. If $n$ is minimal
  s.t.\ $Y'_n=1$ then $f$ is used on the $(n+1)^{\textrm{st}}$ visit to $v$ or
  earlier. Thus
  \[
  Q(e)\leq M_e \leq n = \bar{Q}(e).
  \]
  Note that if $Y'_0=0$ it is possible that the walk uses $f$ before $e$,
  and then $Q(e)\leq 1$.
  
  If $Y'_0=1$ then the coupling considers only the $Y$s, one at each time
  that either $e$ or $f$ is traversed (and not at every visit to $v$).  If
  $n$ is the minimal $n\geq 1$ s.t.\ $Y_n=1$, then $f$ is used at least $n$
  times before $e$. Thus
  \[
  Q(e)= \frac{1}{M_f} \leq \frac{1}{n} = \bar{Q}(e),
  \]
  and in both cases $Q(e)\leq \bar{Q}(e)$, completing the proof.
\end{proof}

Let us remark briefly on how this argument changes if the weights $a$ are
not equal, or possibly random. In order to get the domination
$Q(e)\leq\bar{Q}(e)$ the variables $Y_j$ and $Y'_j$ are defined
differently. Setting $a_v = \sum_{e\ni v} a_e$, we use
\begin{align*}
  Y_j &= \Bern\left(\frac{a_e}{j+1+a_e+a_f}\right),  &
  Y'_j &= \Bern\left(\frac{1+a_f}{2j+1+a_v}\right).
\end{align*}
This changes the moment estimation, and instead of $C a$ we get
$C(a_v+a_v^{1+s})$. If the $a_e$s are all sufficiently small there is no
further difficulty. If the $a$'s are random, this introduces dependencies
between edges, and some higher moments must be controlled.

\begin{corollary*}
  \cref{T:ExpDecay} holds on graphs where the LRRW is recurrent.
\end{corollary*}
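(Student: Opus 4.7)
The plan is a straightforward combination of Lemmas~\ref{L:RUB} and~\ref{L:AUB} through the Cauchy--Schwarz bound \eqref{E:CS1}, followed by summation over paths of domination. Starting from
\[
\E\left[\left(\frac{W_x}{W_{e_1}}\right)^s\right] = \sum_{\gamma\in \Gamma_x} \E\left[\left(\frac{W_x}{W_{e_1}}\right)^s \mathbf 1\{\DD_\gamma\}\right],
\]
I would apply \eqref{E:CS1} to each summand and then invoke the two lemmas with parameter $2s$ in place of $s$. Lemma~\ref{L:AUB} requires $2s<\nicefrac12$, which is precisely the restriction $s\in(0,\nicefrac14)$ in the theorem; the product of the two resulting $\nicefrac12$-power bounds yields
\[
\E\left[\left(\frac{W_x}{W_{e_1}}\right)^s \mathbf 1\{\DD_\gamma\}\right] \leq \big(C(s,K)\sqrt a\big)^{|\gamma|-1}
\]
for each admissible path $\gamma$.

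Next, summing over $\Gamma_x$ requires counting admissible paths by length. Since any $\gamma\in\Gamma_x$ is a simple path (or simple loop) from $v_0$ ending with $x$ or $x^{-1}$, and every vertex has degree at most $K$, the number of such paths of length $n$ is at most $K^n$, and every such $\gamma$ satisfies $|\gamma|\ge \dist(e,v_0)+1$. This produces the geometric series
\[
\sum_{n\ge \dist(e,v_0)+1} K^n\big(C(s,K)\sqrt a\big)^{n-1}.
\]
Once $a$ is small enough that $K\,C(s,K)\sqrt a \leq \nicefrac12$, the series is dominated by twice its first term, giving the bound $2K\big(C(s,K)\sqrt a\big)^{\dist(e,v_0)}$ after absorbing the extra factor of $K$ into the constant (as the paper explicitly permits). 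The first inequality in \eqref{E:UB2} then follows from the normalization $W_{v_0}=1$ (which is well defined in the recurrent case by \cite{DF}), since $W_{e_1}\le W_{v_0}=1$ forces $W_e \le W_e/W_{e_1}$.

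There is essentially no serious obstacle here: the two lemmas carry all the analytic content, and what remains is bookkeeping -- telescoping, Cauchy--Schwarz, and a geometric sum. The only mildly delicate points are tracking constants so that the prefactor $2K$ matches the theorem statement, and remembering that the restriction $s<\nicefrac14$ comes entirely from the $2s<\nicefrac12$ cap in Lemma~\ref{L:AUB}; replacing Cauchy--Schwarz by H\"older with a different exponent split would relax this to $s<\nicefrac13$, as already remarked after the statement of the lemmas.
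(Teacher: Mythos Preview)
Your proposal is correct and follows essentially the same route as the paper's proof: decompose over paths of domination, apply Cauchy--Schwarz \eqref{E:CS1}, invoke \cref{L:RUB,L:AUB} with parameter $2s$, and sum the resulting $(C\sqrt a)^{|\gamma|-1}$ bound over at most $K^n$ paths of length $n\ge\dist(e,v_0)+1$. You even make explicit two points the paper leaves implicit --- why the first inequality in \eqref{E:UB2} follows from $W_{e_1}\le W_{v_0}=1$, and why the restriction $s<\nicefrac14$ arises from the $2s<\nicefrac12$ hypothesis of \cref{L:AUB}.
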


\begin{proof}
  This is just an aggregation of the results of this section:
  \begin{align*}
    \E\left(\frac{W_x}{W_{e_1}}\right)^s
    &= \sum_\gamma\E\left[\left(\frac{W_x}{W_{e_1}}\right)^s {\mathbf 1}
        \{\DD_\gamma\}\right]\\
    \text{by \eqref{E:CS1}} \qquad
    & \leq \sum_\gamma
    \E\left[\prod_{e \in \gamma\setminus
        e_1}\left(\frac{R(e)}{Q(e)}\right)^{2s} \mathbf 1
      \{\DD_{\gamma}\}\right]^{\frac12}  
    \E\left[\prod_{e \in \gamma\setminus e_1} Q(e)^{2s} \mathbf 1
      \{\DD_{\gamma}\}\right]^{\frac12}\\ 
    \textrm{By \cref{L:RUB,L:AUB}}\qquad
    &\le \sum_\gamma
    \left[C(2s)^{|\gamma|-1}\right]^{\frac12}
    \left[\left(C(2s,K)a\right)^{|\gamma|-1}\right]^{\frac12}\\
    &= \sum_\gamma \left(C_0 \sqrt{a} \right)^{|\gamma|-1},
  \end{align*}
  for $C_0=\sqrt{C(2s) C(2s,K)}$. Now, the number of paths of length $\ell$
  is at most $K^\ell$ and the shortest path to $e$ has length
  $\dist(e,v_0)+1$. If we take $a_0$ such that $K C_0 \sqrt{a_0} = \frac12$
  then for $a<a_0$ longer paths give at most a factor of $2$ and so
  \[
  \E\left(\frac{W_x}{W_{e_1}}\right)^s
  \leq 2K (K C_0\sqrt{a})^{\dist(e,v_0)}      \qedhere
  \]
\end{proof}

\section{Recurrence on Bounded Degree Graphs for Small Values of \texorpdfstring{$a$}{a}}
\label{sec:recurr} 

We are now ready to prove \cref{T:recurrence}. As noted above, the main
idea is to approximate the LRRW on $G$ by LRRW on finite balls in $G$.  Let
$R>0$ and let $X^{(R)}$ be the LRRW in the finite volume ball $B_R(v_0)$.  By
\cref{T:DF}, for each $R$ the full distribution of $X^{(R)}$ is given by a
mixture of random conductance models with edge weights denoted by
$(W^{(R)}_e)_{e \in B_R(v_0)}$, and mixing measure denoted by $\mu^{(R)}$.
Recall $\Omega = \R_+^{E}$ is the configuration space of edge weights on
the entire graph $G$.  For fixed $a$, the measures $\mu^{(R)}$ form a
sequence of measures on $\Omega$, equipped with the product Borel
$\sigma$-algebra.

\begin{proof}[Proof of \cref{T:recurrence}]
  \Cref{T:ExpDecay} (or \ref{T:DF} if you prefer) implies that the measures $\mu^{(R)}$ are tight, and so
  have subsequential limits. Let $\mu$ be one such subsequence limit.
  Clearly the law of the random walk in environment $\bW$ is continuous in
  the weights. However, the first $R$ steps of the LRRW on $B_R(v_0)$ have
  the same law as the first $R$ steps of the LRRW on $G$. Thus the LRRW on
  $G$ is the mixture of random walks on weighted graphs with the mixing
  measure being $\mu$. 

  Fix some $s<\nicefrac14$ for the remainder of the proof. For any edge $e\in
  B_R(v_0)$ we have by Markov's inequality that
  \[
  \mu^{(R)}\left(W^{(R)}_e > Q \right) \leq \frac{2K
    (C\surd{a})^{\dist(e,v_0)}} {Q^s}, 
  \]
  where $C=C(s,K)$ comes from \cref{T:ExpDecay}, which we already proved
  for recurrent graphs. Taking $Q=(2K)^{-\dist(e,v_0)}$, and since the
  number of edges at distance $\ell$ from $v_0$ is at most $K^{\ell+1}$, we
  find that the probability of having an edge at distance $\ell$ with $W_e
  > (2K)^{-\ell}$ is at most
  \[
  K^{\ell+1} (2K)^{s \ell} 2K (C\surd{a})^\ell = 2K^2 \left(2^{s} K^{1+s} C
    \surd a \right)^\ell.
  \]
  Let $a_0$ be such that $2^s K^{1+s} C \sqrt{a_0} = \frac12$. Then for
  $a<a_0$ this last probability is at most $K^2 2^{1-\ell}$. Crucially,
  this bound holds uniformly for all $\mu^{(R)}$, and hence also for $\mu$.
  By Borel-Cantelli, it follows that $\mu$-a.s., for all but a finite
  number of edges we have $W_e \leq (2K)^{-\dist(e,v_0)}$. On this event
  the random network $(G,\mathbf W)$ has finite total edge weight $W=\sum_e
  W_e$, and therefore the random walk on the network is positive recurrent,
  with stationary measure $\pi(v) = W_v/2W$.
\end{proof}

\subsection{Trapping and Return Times on Graphs}

With the main recurrence results, \cref{T:recurrence,T:ExpDecay} proved, we
wish to make a remark about the notions of ``localization'' and
``exponential decay''. We would like to point out that one should be
careful when translating them from disordered quantum systems into the
language of hyperbolic non-linear $\sigma$-models and the LRRW. To
illustrate this point, let $G$ be a graph with degrees bounded by $K$, and
consider the behavior of the tail of the return time to the initial vertex
$v_0$. Let
\[
\tau = \min\{t>0: X_t=v_0\}.
\]
\begin{proposition}
  Let $K$ be fixed.  Suppose that $G$ is a connected graph with degree
  bound $K$ and some edge not incident to $v_0$.  Then there is a constant
  $C>0$ depending on $K$ but not on $a$ so that
  \[
  \P(\tau > M) \geq c(a,K)M^{-(K-1)a}.
  \]
\end{proposition}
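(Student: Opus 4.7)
I would construct a small deterministic ``trap'' next to $v_0$ and lower-bound the probability that the LRRW follows it for $2M$ steps.

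\textbf{Step 1 (choosing the trap).} Since $G$ is connected and has some edge not incident to $v_0$, not every neighbor of $v_0$ can be a leaf, so some neighbor $u$ of $v_0$ has $d_u\ge 2$; I fix such a $u$ and any neighbor $w\ne v_0$ of $u$. Write $e=\langle u,w\rangle$ and $g=\langle v_0,u\rangle$, and let $\mathcal T_M$ be the event that the first $2M$ steps of the walk are exactly
\[
v_0,\,u,\,w,\,u,\,w,\,\ldots,\,u,\,w,
\]
i.e.\ cross $g$ once and then oscillate on $e$. During steps $1,\ldots,2M$ the walker stays in $\{u,w\}$, so $\mathcal T_M\subset\{\tau>2M\}$.

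\textbf{Step 2 (closed form for $\P(\mathcal T_M)$).} Along $\mathcal T_M$ only $g$ and $e$ are ever used, so at each step the edge weights are deterministic: $g$ keeps weight $a+1$ throughout; $e$ has weight $a+j$ after its $j$th crossing; every other edge stays at $a$. Multiplying the LRRW transition rules step by step yields
\[
\P(\mathcal T_M)=\frac{1}{d_{v_0}}\prod_{k=1}^{M}\frac{a+2k-2}{d_u a+2k-1}\,\prod_{k=1}^{M-1}\frac{a+2k-1}{d_w a+2k-1},
\]
the two products corresponding to the $M$ transitions $u\to w$ and $M-1$ transitions $w\to u$.

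\textbf{Step 3 (asymptotics and conclusion).} I rewrite each factor in the first product as $1-\tfrac{(d_u-1)a+1}{d_u a+2k-1}$ and in the second as $1-\tfrac{(d_w-1)a}{d_w a+2k-1}$, take logarithms, use $\log(1-x)\ge -x-x^2$ for $x\le\nicefrac12$, and approximate the sums by integrals (equivalently, via the Stirling ratio $\Gamma(k+x)/\Gamma(k+y)\sim k^{x-y}$) to obtain
\[
-\log\P(\mathcal T_M)\le\frac{(d_u-1)a+(d_w-1)a+1}{2}\log M+C(a,K).
\]
Since $d_u,d_w\le K$, this gives $\P(\tau>2M)\ge\P(\mathcal T_M)\ge c(a,K)M^{-(K-1)a}$, and a harmless rescaling $M\mapsto M/2$ then yields the claimed $\P(\tau>M)\ge c(a,K)M^{-(K-1)a}$.

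\textbf{Main obstacle.} The delicate point is the bookkeeping of reinforcements: at $u$ the edge $g$ carries a persistent leakage weight of $a+1$ (the ``$+1$'' being the reinforcement from the single crossing $v_0\to u$), which slightly inflates the effective exponent. Verifying that this extra constant can be absorbed into $c(a,K)$ rather than spoiling the advertised exponent $(K-1)a$ is the main technical issue.
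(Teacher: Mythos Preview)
Your approach is exactly the paper's: step from $v_0$ to a neighbor $u$, then oscillate $2M$ times on an edge $e=\langle u,w\rangle$ not containing $v_0$, and compute the probability of this trap event factor by factor. Your Steps~1--2 match the paper's construction (the paper bounds $d_u,d_w$ by $K$ from the outset while you keep the actual degrees until the end; this is purely cosmetic), and the asymptotic analysis in Step~3 proceeds the same way.

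There is, however, a genuine arithmetic gap in Step~3, and it is precisely the obstacle you flag. Your own display reads
\[
-\log\P(\mathcal T_M)\;\le\;\frac{(d_u-1)a+(d_w-1)a+1}{2}\,\log M+C(a,K),
\]
and bounding $d_u,d_w\le K$ gives an exponent of at most $(K-1)a+\tfrac12$, not $(K-1)a$. The stray ``$+1$'' in the numerator is \emph{not} a bounded additive term that can be absorbed into $c(a,K)$: it contributes an honest factor of $M^{-1/2}$. Concretely, the $k$th $u\to w$ step has probability
\[
\frac{a+2k-2}{d_u a+2k-1}=1-\frac{(d_u-1)a+1}{d_u a+2k-1},
\]
and the ``$+1$'' here (coming from the permanent weight $a+1$ on $g$) sums over $k\le M$ to $\tfrac12\log M + O(1)$. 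So your argument actually proves $\P(\tau>M)\ge c(a,K)\,M^{-(K-1)a-1/2}$, which is strictly weaker than the stated bound whenever $M$ is large.

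For comparison, the paper's proof contains the same slip. Its first bracket has factors
\[
\frac{2j+a}{2j+1+Ka}=1-\frac{1+(K-1)a}{2j+1+Ka}=1-\frac{(K-1)a}{2j+1}-\frac{1}{2j+1}+O(j^{-2}),
\]
not $1-\frac{(K-1)a}{2j+1}+O(j^{-2})$ as written there; the missing $-\tfrac{1}{2j+1}$ again produces an extra $M^{-1/2}$. Thus the trap construction (yours and the paper's) yields the exponent $(K-1)a+\tfrac12$; the qualitative message of the proposition (polynomial tails for $\tau$, hence few finite moments) is unaffected, but the advertised exponent $(K-1)a$ does not follow from this argument.
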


Thus despite the exponential decay of the weights, the return time has at
most $(K-1)a-\frac12$ finite moments.

\begin{proof}
  There is some edge $e$ at distance $1$ from $v_0$. Consider the event $E_M$
  that the LRRW moves towards $e$ and then crosses $e$ back and forth $2M$
  times. On this event, $\tau>2M$. The probability of this event is at
  least
  \begin{align*}
    \P(E_M)
    &\geq \frac{1}{K}\left[\prod_{0\leq j<M}\frac{2j+a}{2j+1+Ka} \right]\left[\prod_{0\leq j<M} \frac{2j+1+a}{2j+1+Ka} \right]\\
    &=\frac{1}{K}\prod_{0\le j<M}\left(1-\frac{(K-1)a}{2j+1}+O\Big(\frac{1}{j^2}\Big)\right)^2\\
    &\geq c(a,K) M^{-(K-1)a}.   \qedhere
\end{align*}
\end{proof}

One might claim that we only showed this for the first return time. But in
fact, moving to the RWRE point of view shows that this is a phenomenon that
can occur at any time. Indeed, if $\P(\tau>M)\ge\eps$ then this means that
with probability $\ge \frac 12\eps$, the environment $\bW$ satisfies that
$\P(\tau>M\,|\,\bW)\ge\frac 12\eps$. But of course, once one conditions on
the environment, one has stationarity, so the probability that the
$k^\textrm{th}$ excursion length (call it $\tau_k$) is bigger than $M$ is
also $\ge\frac 12 \eps$. Returning to unconditioned results give that
\[
\P(\tau_k>M)\ge\frac 14 \eps^2=c(a,K)M^{-2(K-1)a} \qquad\forall k
\]
This is quite crude, of course, but shows that the phenomenon of polynomial
decay of the return times is preserved for all times.

\subsection{Reminiscences}

Let us remarks on how we got the proof of these results. We started with a very simple heuristic picture, explained to us by Tom Spencer: when the initial weights are very small, the process gets stuck on the first edge for very long. A simple calculation shows that this does not hold forever but that at some point it breaks to a new edge. it then runs over these two edges, roughly preserving the weight ratio, until breaking into a third edge, roughly at uniform and so on. 

Thus our initial approach to the problem was to try and show \emph{using only the LRRW picture} that weights stabilize. This is really the point about the linearity --- only linear reinforcement causes the ratios of weights of different edges to converge to some value. One direction that failed was to show that it is really independent of the rest of the graph. We tried to prove that ``whatever the rest of the graph does, the ratio of the weights of two edges is unlikely to move far, when both have already been visited enough times''. So we emulated the rest of the graph by an adversary (which, essentially, decides through which edge to return to the common vertex), and tried to prove that any adversary cannot change the weight ratio. This, however, is not true. A simple calculation that shows that an adversary which simply always returns the walker through edge $e$ will beat any initial advantage the other edge, $f$ might have had.

Trying to somehow move away from a general adversary, we had the idea that the RWRE picture allows to restrict the abilities of the adversary, which finally led to the proof above (which has no adversary, of course). We still think it would be interesting to construct a pure LRRW proof, as it might be relevant to phase transitions for other self-interacting random walks, which are currently wide open.

\section{Transience on Non-amenable Graphs}\label{sec:nonamen}

In this section we prove \cref{T:transience}. The proof is ideologically similar to that of \cref{T:ExpDecay}, and in particular we prove the main lemmas under the assumption that the process is \emph{recurrent} --- there it seemed natural, here it might have seemed strange, but after reading \cref{sec:recurr} maybe not so much: in the end we will apply these lemmas for finite subgraphs of our graph. We will then use compactness, i.e. Theorem \ref{T:DF}, to find a subsequential limit on $G$ which also describes the law of LRRW on $G$.

Let us therefore begin with these lemmas, which lead to a kind of "local
stability" of the environment $\bW$ when the initial weights $\mathbf a$
are uniformly large.

As in the case of small $a$, our stability argument has two parts. We use the dynamic
description to argue that the walk typically uses all edges leaving a
vertex roughly the same number of times (assuming it enters the vertex
sufficiently many times). We then use the random conductance view point to
argue that the random weights are typically close to each other. Finally,
we use a percolation argument based on the geometry of the graph to deduce
a.s.\ transience.

\subsection{Local Stability if LRRW is Recurrent}
Let us assume throughout this subsection that the graph $G$ and the weights $a$ are given and that they satisfy that LRRW is recurrent.

Let $L$ be some parameter which we will fix later. The main difference between the proofs here and in \cref{sec:proof} is that here we will examine the process until it leaves a given vertex $L$ times through each edge rather than once. Let therefore $\tau = \tau(L,v)$ denote the number of visits to
$v$ until the LRRW has exited $v$ at least $L$ times along each edge $e \ni
v$.  Note that since we assume that the LRRW is recurrent, these stopping
times are a.s.\ finite.  Let $M(e) = M(L,v,e)$ denote the total number of
{\em outgoing} crossings from $v$ along $e$ up to and including the
$\tau^\textrm{th}$ exit.  Given the environment $\bW$, call a vertex $v$ {\bf
  $\eps$-faithful} if $M(e)/\tau$ is close to its asymptotic value of
$W_e/W_v$, i.e.\
\[
\frac{M(e) / \tau}{W_e/W_v} \in [1-\eps, 1+\eps] \qquad \text{ for all
  $e\ni v$}.
\]

\begin{lemma}\label{L:faithful}
  Let the degree bound $K$ be fixed.  Then for any $\eps,\delta>0$ there
  exists $L = L(K,\eps,\delta)$ so that 
  \[
  \P^{\bW} (v \text{ is not $\eps$-faithful}) < \delta.
  \]
  Moreover, these events are independent under $\P^{\bW}$.
\end{lemma}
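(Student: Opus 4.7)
\emph{Plan of proof.} The plan is to reduce the statement to a standard concentration estimate for i.i.d.\ multinomial sampling, and to observe that this estimate turns out to be uniform in the underlying distribution. Under $\P^{\bW}$, the walk is Markov with exit probabilities $p_e := W_e/W_v$ at $v$, so the sequence $(Z^v_n)_{n\ge 1}$ of edges by which the walk exits $v$ on successive visits is i.i.d.\ with law $(p_e)_{e\ni v}$, and the sequences at different vertices may be constructed as mutually independent. Recurrence ensures that $v$ is visited infinitely often, so $\tau$ is a.s.\ finite and both $\tau$ and the counts $M(e)$ are measurable functions of $(Z^v_n)_{n\ge 1}$ alone. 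The independence claim of the lemma is then immediate: the events ``$v$ is $\eps$-faithful'' at distinct vertices depend on disjoint families of independent coordinates.

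It then suffices to prove the following abstract statement: for any $k\le K$ and any probability vector $(p_1,\dots,p_k)$ with all $p_i>0$, if one samples i.i.d.\ multinomial$(p_1,\dots,p_k)$ until each category has appeared at least $L$ times, letting $\tau$ denote the stopping time and $N_i$ the final counts, then
\[
\P\!\left(\max_i\Big|\tfrac{N_i/\tau}{p_i}-1\Big|>\eps\right)<\delta
\]
as soon as $L\ge L_0(K,\eps,\delta)$, \emph{uniformly} in the vector $(p_1,\dots,p_k)$. Applied with $p_e=W_e/W_v$, a union bound over the $k\le K$ edges incident to $v$ gives the lemma.

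The main obstacle is this uniformity, since one might naively fear that a very small $p_i$ forces a very large $L$. To handle it, set $p^*=\min_i p_i$ and $T=L/p^*$. Standard negative binomial estimates show that with probability at least $1-K e^{-cL}$ one has $\tau = T\bigl(1+O(L^{-1/2})\bigr)$: indeed $\tau=\max_i \tau^{(i)}$ where $\tau^{(i)}$ is the waiting time for the $L$-th appearance of category $i$, each $\tau^{(i)}$ being a sum of $L$ i.i.d.\ $\Geom(p_i)$ variables with mean $L/p_i$ and standard deviation $O(\sqrt{L}/p_i)$. Conditionally on $\tau$ lying in such a window, each count $N_i$ is a binomial-type variable with mean $\tau p_i\ge L p_i/p^*\ge L$, and a Chernoff bound gives $N_i=\tau p_i\bigl(1+O(L^{-1/2})\bigr)$ outside an event of probability $K e^{-cL}$. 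Crucially, \emph{both} error terms scale like $L^{-1/2}$ independently of how small $p^*$ is, because the sample sizes involved automatically grow like $1/p^*$. Hence $L$ of order $\eps^{-2}\log(K/\delta)$ suffices, and it depends only on $K,\eps,\delta$. Once this is in hand the rest of the argument is routine bookkeeping.
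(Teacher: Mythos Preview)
Your proof is correct and follows the same reduction as the paper: under $\P^{\bW}$ the exit sequence at each vertex is i.i.d.\ multinomial with law $(W_e/W_v)_{e\ni v}$, independent across vertices, so everything reduces to a concentration estimate uniform in the probability vector. For that uniformity the paper offers two routes and carries out the soft one (a compactness/semi-continuity argument, noting that $pA_n$ tends to a sum of i.i.d.\ exponentials as $p\to 0$); you carry out the quantitative one via negative-binomial and Chernoff tail bounds, which the paper explicitly mentions as a valid alternative and which buys you the explicit dependence $L=O(\eps^{-2}\log(K/\delta))$. One small wording point: your ``conditionally on $\tau$ lying in such a window, each $N_i$ is binomial-type'' is not literally true since $\tau$ and the $N_i$ are dependent, but the intended sandwiching $N_i(T_-)\le N_i\le N_i(T_+)$ between two honest binomials with means $\ge L$ makes the step rigorous.
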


A crucial aspect of this lemma is that $L$ does not depend on the
environment $\bW$.  A consequence is that for this $L$, the $\eps$-faithful
vertices stochastically dominate $1-\delta$ site percolation on $G$ for any
$\P^{\bW}$, and hence also for the mixture $\P$.

\begin{proof}
  Given $\bW$, the exits from $v$ are a sequence of i.i.d.\ random
  variables taking $d_v\leq K$ different values with some probabilities
  $p(e)=W_e/W_v$.  Let $A_n(e)$ be the location of the $n^\textrm{th}$ appearance of
  $e$ in this sequence, so that $A_n$ is the cumulative sum of $\Geom(p)$
  random variables. By the strong law of large numbers, there is some $L$
  so that with arbitrarily high probability (say,\ $1-\delta$), we have
  \begin{equation}\label{eq:LLN}
    \left|\frac{pA_n}{n} - 1\right| < \eps \qquad \text{for all } n\geq L.
  \end{equation}

  We now claim that $L$ can be chosen uniformly in $p$ (for $\eps$ and $\delta$ fixed). This can be proved either by a replacing the law of large numbers by a quantitative analog, say a second moment estimate, or by using continuity in $p$: Indeed, the only possible discontinuity is at zero, and $pA_n$ converges as $p\to0$ to a sum of i.i.d.\ exponentials,
  and so with this interpretation the same claim holds also for
  $p=0$. Since the probability of a large deviation at some large time for
  a sum of i.i.d.\ random variables is upper semi-continuous in the
  distribution of the steps, and since upper semi-continuous functions on
  $[0,1]$ are bounded from above, we get the necessary uniform bound.

  We now move back from the language of $A_n$ to counting exits at specific
  time $t$. Denote $M_t(e)$ the number of exits from $v$ through $e$ in the
  first $t$ visits to $v$.  If $t\in [A_n,A_{n+1})$ then $M_t=n$ and
  hence, for $t\ge A_L$ and with probability $>1-\delta$, 
\[
\frac{M_t(e)}{pt}\in\left(\frac{n}{pA_{n+1}},\frac{n}{pA_n}\right]\stackrel{\textrm{\cref{eq:LLN}}}{\subset}[1-\eps',1+\eps']
\]
(where $\eps'$ is some function of $\eps$ with $\eps'\to 0$ when $\eps\to 0$).

Since $\tau\ge A_L(e)$ for all edges $e$ coming out of $v$, we get
$M(e)/p\tau \in [1-\eps',1+\eps']$ for all $e$, with probability
$>1-K\delta$. Replacing $\eps'$ with $\eps$ and $K\delta$ with $\delta$
proves the lemma.
\end{proof}

The second step is to use the dynamical description of the LRRW to show
that if $a$ is large then the $M(L,v,e)$ are likely to be roughly
equal. Let $S(L,v) = \max_{e\ni v} M(L,v,e)$ be the number of exits
from $v$ along the most commonly used edge. By the definition,
$M(L,v,e) \geq L$ for all $e, v$.  We will therefore call a vertex {\bf $\eps$-balanced} if $S(L,v) \leq (1+\eps)L$.

\begin{lemma}\label{L:balanced}
  For any $K$, $\eps$, $\delta$ there is an $L_0=L_0(K,\eps,\delta)$ such that for any $L>L_0$ there is some
  $a_0$ so that for any $a>a_0$ such that LRRW on $G$ with initial weight $a$ is recurrent we have
  \[
  \P(v \text{ is not $\eps$-balanced}) < \delta.
  \]
  Moreover, for such $a$, the $\eps$-balanced vertices stochastically
  dominate $1-\delta$ site percolation.
\end{lemma}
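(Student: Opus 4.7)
The plan is to analyze the LRRW exit sequence at $v$ by coupling with an iid uniform multinomial, exploiting that for $a$ large relative to $L$, the reinforcement contribution to the edge weights at $v$ is negligible compared to the base weight $a$. First I would fix $L_0 = L_0(K,\eps,\delta)$ large enough that for iid uniform multinomial draws over any $d \le K$ categories, the analogous $\eps$-balanced event holds with probability at least $1-\delta/2$; a standard Chernoff estimate on the negative multinomial waiting times gives $L_0 \sim \eps^{-2}\log(K/\delta)$. Let $B$ denote this iid balance event; on $B$ the iid stopping time $\tilde\tau$ is at most $d_v(1+\eps)L \le K(1+\eps)L$.

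Next, for any $L \ge L_0$, I would couple the LRRW exit sequence at $v$ with the iid sequence step by step. As long as the coupling has not failed, the LRRW sequence agrees with the iid one, and on $B$ the counts stay bounded by $(1+\eps)L$ per edge, with total incoming count at $v$ at most the number of visits $\le K(1+\eps)L$. Hence each undirected count satisfies $N(e) \le (K+1)(1+\eps)L$ and $\sum_f N(f) \le 2K(1+\eps)L$. Substituting into the LRRW transition formula gives
\[
p_k(e) = \frac{a + N^{(k)}(e)}{d_v a + \sum_f N^{(k)}(f)} \in \Big[\tfrac{1}{d_v} - \tfrac{CL}{a},\;\tfrac{1}{d_v} + \tfrac{CL}{a}\Big]
\]
for some $C = C(K,\eps)$, so the single-step total-variation distance from the uniform law is at most $C'L/a$. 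A union bound over the first $\tilde\tau \le K(1+\eps)L$ steps yields a coupling-failure probability at most $C''L^2/a$, which is less than $\delta/2$ provided $a \ge a_0 := 2C''L^2/\delta$. On the intersection of $B$ and coupling success, the LRRW sequence matches the iid one through $\tilde\tau$; consequently the LRRW stopping time equals $\tilde\tau$ and the LRRW counts inherit the balance of the iid counts, giving $\P(v\text{ is not }\eps\text{-balanced}) < \delta$.

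For the stochastic domination, I would invoke the mixing measure representation $\P = \int \P^{\bW} d\mu(\bW)$ from \cref{T:DF}: given $\bW$, the exit sequences at distinct vertices are independent iid, and whether $v$ is $\eps$-balanced is determined by the first $\tau$ exits at $v$ alone, so these events are conditionally independent under $\P^{\bW}$. A Markov-type passage converts the marginal $\P$-bound into a uniform-in-$\bW$ bound for $\mu$-most $\bW$ (with any constant loss absorbed by slightly adjusting $\delta$), and conditional independence then delivers the stochastic domination by i.i.d.~Bern$(1-\delta)$ site percolation. The main obstacle is the control of the incoming counts $I(e)$, which are not individually bounded by the stopping criterion but only in aggregate; the aggregate bound is precisely what makes the weight estimates—and hence the coupling—uniform in the trajectory up to $\tilde\tau$, so that no bootstrapping on the unknown stopping time is required.
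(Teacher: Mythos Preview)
Your coupling argument for the marginal bound is fine and is essentially the paper's argument in different clothing: the paper observes that at the $i^{\textrm{th}}$ visit to $v$ the exit probability along any edge is at least $a/(d_v a + 2i-1) \ge 1/d_v - 2T/a$ for $i\le T$, and uses this to dominate each exit count by a $\Bin(T,\tfrac{1}{d_v}-\tfrac{2T}{a})$ variable with $T=(1+\eps)d_v L$. Your total-variation coupling with the uniform multinomial encodes the same uniform deviation estimate.

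The genuine gap is in your stochastic-domination step. Conditional independence of the balanced events under $\P^{\bW}$ is correct, but the ``Markov-type passage'' from the marginal bound $\P(A_v)<\delta$ to a uniform-in-$\bW$ bound does \emph{not} work. Markov's inequality gives, for each fixed $v$, that $\P^{\bW}(A_v)\le\delta'$ on a set of environments of $\mu$-measure $\ge 1-\delta/\delta'$, but this exceptional set depends on $v$. You cannot intersect over all (infinitely many) vertices, and in fact the implication is false in general: one can build mixtures of product measures where each marginal is small yet no i.i.d.\ Bernoulli domination holds (e.g.\ two environments, in each of which exactly one of two vertices is almost surely bad).

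The fix is already present in your own coupling. Your per-step total-variation bound $|p_k(e)-1/d_v|\le C k/a$ holds \emph{uniformly in the entire trajectory}, because it only uses $\sum_f N^{(k)}(f)=2k-1$. Hence the coupling-failure bound $C''L^2/a$ holds conditionally on any event measurable with respect to what happens away from $v$; since the i.i.d.\ reference sequence is external randomness, the event $B$ is also independent of such conditioning. Therefore $\P(v\text{ not }\eps\text{-balanced}\mid \mathcal{F}_{V\setminus\{v\}})<\delta$ for any such $\sigma$-algebra, which is exactly the statement that yields domination by $1-\delta$ site percolation. This is precisely how the paper argues: it proves the bound conditionally on arbitrary events at other vertices directly from the dynamical description, bypassing the RWRE picture entirely for this lemma.
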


\begin{proof}
  We prove directly the stochastic domination. To this end, we prove that
  any $v$ is likely to be $\eps$-balanced even if we condition on arbitrary
  events occurring at other vertices.  At the $i^\textrm{th}$ visit to $v$, the
  probability of exiting via any edge $e$ is at least $a/(d_v
    a+2i-1)$. Throughout the first $T$
  visits to $v$, this is at least $1/d_v - 2T/a$. Since
  this bound is uniform in the trajectory of the walk, we find that the
  number of exits along an edge $e$ stochastically dominates a
  $\Bin(T,\frac{1}{d_v} - \frac{2T}{a})$ random variable, even if we
  condition on any event outside of $v$.
  
  We take $T = (1+\eps)d_v L$. If $a$ is large enough ($C K^2 L/\eps$
  suffices) then the binomial has expectation at least $L-\frac 12 \eps L$.
  Since it has variance at most $T=O(L)$, if $L$ is sufficiently large then
  the binomial is very likely to be at least $L$.  In summary, given
  $\delta$ and $\eps$ we can find some large $L$ so that for any large
  enough $a$, with probability at least $1-\delta$ there are at least $L$
  exits along an any edge $e$ up to time $T$. This occurs for all edges
  $e\ni v$ with probability at least $1-K\delta$.

  Finally notice that if all edges are exited at least $L$ times, then this
  accounts for $d_vL$ exits, and only $T-d_vL$ exits remain unaccounted
  for. Even if they all happen at the same edge, that edge would still have
  only $L+\eps d_VL$ exits, hence $S\le L+\eps d_vL$. Therefore $v$ is
  $\eps d_v$-balanced with probability at least $1-K\delta$. Redefining
  $\eps$ and $\delta$ gives the lemma.
\end{proof}

Call a vertex {\bf $\eps$-good} (or just good) if it is both
$\eps$-faithful and $\eps$-balanced. Otherwise, call the vertex {\bf bad}.
Note that if $v$ is good, then weights of edges leaving $v$ differ by a factor
of at most $\frac{1+\eps}{1-\eps} \leq (1-\eps)^{-2}$.

\begin{corollary}
\label{C:Main}
  For any $K,\eps,\delta$, for any large enough $L$, for any large enough
  $a$, the set of $\eps$-good vertices stochastically dominates the
  intersection of two Bernoulli site percolation configurations with
  parameter $1-\delta$.
\end{corollary}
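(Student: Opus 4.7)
The plan is to combine the couplings furnished by \cref{L:faithful,L:balanced}. Fix $\eps,\delta>0$; choose $L=L(K,\eps,\delta)$ so that \cref{L:faithful} applies, and then take $a_0=a_0(K,\eps,\delta,L)$ so that \cref{L:balanced} applies for all $a>a_0$. For such $(L,a)$ I will construct, on a common probability space obtained from the LRRW space by adjoining independent auxiliary uniforms, two random fields $F,B\in\{0,1\}^V$, each marginally a $(1-\delta)$-Bernoulli site percolation, satisfying $F_v\le \mathbf 1\{v\text{ is $\eps$-faithful}\}$ and $B_v\le \mathbf 1\{v\text{ is $\eps$-balanced}\}$. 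Since a vertex is $\eps$-good precisely when it is both faithful and balanced, $F_v B_v\le \mathbf 1\{v\text{ is $\eps$-good}\}$ pointwise, so the set of $\eps$-good vertices dominates $F\cap B$, which is the desired intersection of two Bernoulli percolations with parameter $1-\delta$.

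For $F$, \cref{L:faithful} gives that, conditional on the environment $\bW$, the events $\{v\text{ is $\eps$-faithful}\}$ are independent with marginal probabilities $p_v(\bW)\ge 1-\delta$. Letting $(U_v)_{v\in V}$ be iid uniforms on $[0,1]$ independent of $(\bW,X)$, set
\[
F_v := \mathbf 1\{v\text{ is $\eps$-faithful}\}\cdot \mathbf 1\bigl\{U_v\le (1-\delta)/p_v(\bW)\bigr\}.
\]
Then $F_v\le \mathbf 1\{v\text{ faithful}\}$, and conditional on $\bW$ the $F_v$ are independent Bernoulli $(1-\delta)$; averaging over $\bW$ yields that $F$ is iid Bernoulli $(1-\delta)$ under the annealed measure. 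For $B$, the proof of \cref{L:balanced} actually establishes the sharper statement that $\P(v\text{ is $\eps$-balanced}\mid \mathcal F_{-v})\ge 1-\delta$ for the $\sigma$-algebra $\mathcal F_{-v}$ of events happening away from $v$. Applying Strassen's theorem inductively along any enumeration of $V$, using a further independent family of auxiliary uniforms, produces a coupling with $B_v\le \mathbf 1\{v\text{ balanced}\}$ and $B$ iid Bernoulli $(1-\delta)$.

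The main subtlety, and essentially the only point requiring care, is that $F$ and $B$ will not be independent of each other: both are measurable with respect to the LRRW trajectory, with $B$ determined by the exit counts at $v$ and $F$ by the comparison of empirical exit frequencies against the environment $\bW$. The statement of the corollary is phrased to sidestep this by only constraining the marginal laws of the two percolations rather than their joint distribution; when the corollary is invoked in the percolation argument for transience that follows, any additional loss arising from this dependence can be absorbed into a further small adjustment of $\delta$ (for instance via a Liggett--Schonmann--Stacey style domination step).
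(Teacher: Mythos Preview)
Your proof is correct and matches the paper's intended (but unwritten) argument: the paper states the corollary without proof, treating it as immediate from \cref{L:faithful,L:balanced}, and you have correctly supplied the coupling details---the thinning construction for $F$ and the sequential/Strassen coupling for $B$ are exactly what is needed, and your observation that $F$ and $B$ need not be independent is precisely the point the paper makes in the sentence following the corollary. One minor remark on your closing paragraph: the paper does not invoke a Liggett--Schonmann--Stacey step to handle the dependence; in the Peierls argument for \cref{T:transience} it simply uses a union bound, noting that the bad vertices in $\partial F$ are a union of two $\delta$-Bernoulli configurations and bounding each separately.
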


Unfortunately, these two percolation processes are not independent, so we
cannot merge them into one $(1-\delta)^2$ percolation.

\subsection{Application to Infinite Graphs}

Let $G_n$ denote the ball of radius $n$ in $G=(V, E)$ with initial vertex
$v_0$. Let $\mu_n$ denote the mixing measure guaranteed by the first half
of Theorem \ref{T:DF}. Further let $\mathcal P_n$ be the sequence of
coupling measures guaranteed by Corollary \ref{C:Main}.

According to the second half of Theorem \ref{T:DF} the measures $\mu_n$ are
tight.  Quite obviously the remaining marginals of $\mathcal P_n$ are also
tight.  Therefore $\mathcal P_n$ is tight.  Thus we can always pass to a
subsequential limit $\mathcal P$ so that the first marginal is a mixing
measure for $\mu$ and the conclusion of Corollary \ref{C:Main} holds.  We
record this in a proposition:

\begin{proposition}
  For any $K,\eps,\delta$, for any large enough $L$, for any large enough
  $a$ the following holds. For any weak limit $\mu$ of finite volume mixing
  measures there is a coupling so that the set of $\eps$-good vertices
  (with respect to $\mu$) stochastically dominates the intersection of two
  Bernoulli site percolation configurations with parameter $1-\delta$.
\end{proposition}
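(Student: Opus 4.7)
The plan is to lift Corollary~\ref{C:Main} from finite subgraphs to the infinite graph by a standard tightness--plus--weak--subsequential--limit argument, essentially along the lines already sketched in the two paragraphs preceding the proposition. The only genuine subtlety is ensuring that the stochastic domination of good vertices by $\omega^1 \cap \omega^2$ survives passage to the limit.

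Concretely, first I would set $G_n = B_n(v_0)$. Being finite, LRRW on $G_n$ is recurrent, so Corollary~\ref{C:Main} applies and produces a coupling measure $\mathcal{P}_n$ on $\Omega \times \{0,1\}^V \times \{0,1\}^V$ with first marginal $\mu_n$, with second and third marginals independent Bernoulli$(1-\delta)$ site percolations $\omega^1,\omega^2$, and with the property that $\mathcal{P}_n$-a.s.\ every $v \in G_n$ having $\omega^1_v = \omega^2_v = 1$ is $\eps$-good. (Values $\omega^i_v$ for $v \notin G_n$ are taken to be unconstrained i.i.d.\ Bernoullis, which only helps tightness.) Next I would invoke tightness: the family $\{\mu_n\}$ is tight on $\Omega$ by the second half of Theorem~\ref{T:DF}, since the constants there depend only on the bounded quantities $a_v \le Ka$ and $a_e \le a$; the percolation marginals are tight because they live on the compact space $\{0,1\}^V$. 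So $\{\mathcal{P}_n\}$ is tight, and passing to a weakly convergent subsequence gives $\mathcal{P}_{n_k} \Rightarrow \mathcal{P}$, whose first marginal is a weak limit $\mu$ and whose other marginals remain independent Bernoulli$(1-\delta)$ percolations.

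The main obstacle is that the event ``$v$ is $\eps$-good'' is defined via the walk trajectory and the stopping time $\tau(L,v)$, which is a.s.\ finite at each finite $n$ but may well be infinite under $\mathcal{P}$ (this is, after all, the transient regime we wish to reach). To sidestep this I would pass to a purely environmental event: being $\eps$-good entails that all weights $W_e$ for $e \ni v$ lie within a factor $((1+\eps)/(1-\eps))^2$ of each other, and this is a closed subset
\begin{equation*}
B^\eps_v := \bigl\{\bW : W_e / W_{e'} \le \bigl((1+\eps)/(1-\eps)\bigr)^2 \text{ for all } e, e' \ni v\bigr\} \subset \Omega
\end{equation*}
in the product topology. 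For any finite $F \subset V$ and any $n$ with $F \subset G_n$, the event
\begin{equation*}
\bigl\{(\bW,\omega^1,\omega^2) : \omega^1_v \omega^2_v = 1 \Rightarrow \bW \in B^\eps_v \text{ for every } v \in F\bigr\}
\end{equation*}
is closed in the joint product topology and has $\mathcal{P}_n$-probability $1$; by the Portmanteau theorem its $\mathcal{P}$-probability is also $1$. Countable additivity as $F \uparrow V$ then yields $\mathcal{P}$-a.s.\ containment $\omega^1 \cap \omega^2 \subset \{v : \bW \in B^\eps_v\}$, which is the environmental form of the stated domination and is what the downstream transience argument actually uses. (If one prefers to keep the trajectory-based definition of good, one can simply relabel $B^\eps_v$ as the definition of ``good'' in the limit, which is consistent with Lemmas~\ref{L:faithful} and~\ref{L:balanced} at finite volume.)
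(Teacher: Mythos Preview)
Your proposal is correct and follows precisely the tightness-plus-subsequential-limit argument that constitutes the paper's entire proof of this proposition (the two paragraphs immediately preceding it). You are in fact more careful than the paper: where the paper simply asserts that ``the conclusion of Corollary~\ref{C:Main} holds'' for the limit coupling $\mathcal P$, you explicitly address the issue that the trajectory-based notion of $\eps$-good need not survive passage to a possibly transient limit, resolving it by passing to the closed environmental event $B^\eps_v$ via Portmanteau---and you correctly observe that $B^\eps_v$ is exactly what the downstream proof of Theorem~\ref{T:transience} actually uses.
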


We now use a Peierls' argument to deduce transience for large enough $a$.
We shall use two results concerning non-amenable graphs. The standard
literature uses edge boundaries so let us give the necessary definitions:
we define the edge boundary of a set by $\partial_\textrm{E}(A)=\{(x,y)\in E
:x\in A, y\notin A\}$, $\Vol A = \sum_{v \in {A}} d_v$ and 
\[
\iota_\textrm{E}=\inf_{A\subset G, \Vol A<\infty}\frac{|\partial_\textrm{E}(A)|}{\Vol A}.
\]
Clearly
\[
\iota =\inf\frac{|\partial A|}{|A|}\ge\inf\frac{|\partial_\textrm{E}A|/K}{K\Vol A}=\frac 1{K^2} \iota_\textrm{E}.
\]
and similarly $\iota \le K^2\iota_\textrm{E}$.

The first result that we will use is Cheeger's inequality:

\begin{theorem}\label{T:kesten}
  If $G$ is non-amenable then the random walk on $G$ has return
  probabilities $p_n(0,0) \leq C e^{-\beta n}$, where $\beta>0$ depends
  only on the Cheeger constant $\iota_\textrm{E}(G)$.
\end{theorem}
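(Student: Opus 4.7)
The plan is to derive the exponential decay from a spectral gap for simple random walk, following the classical Cheeger--Kesten approach. View simple random walk on $G$ as a reversible Markov chain with stationary measure $\pi(v)=d_v$, and let $P$ denote its transition operator, acting as a bounded self-adjoint operator on $\ell^2(V,\pi)$; write $\rho:=\|P\|_{\mathrm{op}}$ for its spectral radius. Cauchy--Schwarz together with self-adjointness gives
\[
p_n(v_0,v_0)=\frac{\langle P^n\delta_{v_0},\delta_{v_0}\rangle_\pi}{\pi(v_0)}\leq\frac{\|P^n\delta_{v_0}\|_\pi\,\|\delta_{v_0}\|_\pi}{\pi(v_0)}\leq\rho^n,
\]
using $\|\delta_{v_0}\|_\pi^2=\pi(v_0)$. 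So the theorem reduces to proving a uniform spectral gap $\rho\leq 1-\beta$ with $\beta>0$ depending only on $\iota_\textrm{E}$.

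This reduction is Cheeger's inequality for graphs, which I would prove by the standard coarea argument. Given any finitely supported $f:V\to\R_{\geq 0}$, slicing $f^2$ along its level sets and applying the isoperimetric inequality $|\partial_\textrm{E} A|\geq\iota_\textrm{E}\Vol A$ to each superlevel set yields
\[
\sum_{\{x,y\}\in E}\bigl|f(x)^2-f(y)^2\bigr|=\int_0^\infty\bigl|\partial_\textrm{E}\{f^2>t\}\bigr|\,dt\geq\iota_\textrm{E}\int_0^\infty\Vol\{f^2>t\}\,dt=\iota_\textrm{E}\|f\|_\pi^2.
\]
Factoring $|f(x)^2-f(y)^2|=|f(x)-f(y)|\cdot|f(x)+f(y)|$ and applying Cauchy--Schwarz bounds the left side above by $\sqrt{\langle(I-P)f,f\rangle_\pi\langle(I+P)f,f\rangle_\pi}\leq\sqrt{2}\,\|f\|_\pi\sqrt{\langle(I-P)f,f\rangle_\pi}$. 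Squaring and rearranging yields $\langle(I-P)f,f\rangle_\pi\geq\tfrac12\iota_\textrm{E}^2\|f\|_\pi^2$ for $f\geq 0$; the elementary observation $|\langle Pf,f\rangle_\pi|\leq\langle P|f|,|f|\rangle_\pi$ (immediate from $\langle Pf,f\rangle_\pi=2\sum_{\{x,y\}\in E}f(x)f(y)$) then extends this to signed $f$ via $|f|$, giving $\rho\leq 1-\iota_\textrm{E}^2/2$.

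Combining the two steps, $p_n(v_0,v_0)\leq\rho^n\leq(1-\iota_\textrm{E}^2/2)^n\leq e^{-\iota_\textrm{E}^2 n/2}$, so the theorem holds with $C=1$ and $\beta=\iota_\textrm{E}^2/2$. The only subtle step is the extension of Cheeger from nonnegative to signed $f$: it is needed to bound the full spectral radius $\rho$ and not merely $\lambda_{\max}(P)$, since on (near-)bipartite graphs the negative end of the spectrum could otherwise dominate. The $|f|$ comparison handles this cleanly. Every other step is routine spectral theory, and the non-amenability hypothesis enters solely through $\iota_\textrm{E}>0$.
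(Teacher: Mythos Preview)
Your argument is correct and is essentially the standard Dodziuk--Cheeger proof. Note, however, that the paper does not supply its own proof of this statement: it simply records it as a known result and refers the reader to \cite{D84}. So there is nothing to compare against; you have written out the classical argument that the paper is content to cite.

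Two minor remarks on the write-up. First, the coarea/Cheeger step is carried out for finitely supported $f$; you should say explicitly that the resulting quadratic-form bound extends to all of $\ell^2(V,\pi)$ by density, which then gives the operator-norm bound $\rho\le 1-\iota_\textrm{E}^2/2$. Second, the passage from $\|P^n\|_{\mathrm{op}}$ to $\rho^n$ uses self-adjointness of $P$ (so that operator norm equals spectral radius and $\|P^n\|=\|P\|^n$); this is implicit but worth a word. Your handling of the bottom of the spectrum via $|\langle Pf,f\rangle_\pi|\le\langle P|f|,|f|\rangle_\pi$ is exactly the right move and is the only point where a careless reader might otherwise slip.
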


Cheeger proved this for manifolds. See e.g.\ \cite{D84} for a proof in the
case of graphs. A second result we use is due to Vir\'ag \cite[Proposition
3.3]{Virag}. Recall that the anchored expansion constant is defined as
\[
\alpha(G) = \lim_{n \rightarrow \infty} \inf_{|A| \geq n}
\frac{|\partial_\textrm{E} A|}{\text{Vol}(A)} 
\]
where the infimum ranges over {\em connected} sets containing a fixed
vertex $v$. It is easy to see that $\alpha$ is independent of the choice of
$v$.

\begin{proposition}[\cite{Virag}]\label{P:anchored}
  Every infinite graph $G$ with $\alpha$-anchored expansion contains for
  any $\eps>0$ an infinite subgraph with edge Cheeger constant at least
  $\alpha-\eps$.
\end{proposition}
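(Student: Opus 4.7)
The plan is to construct the desired subgraph $H$ via an iterative trimming procedure starting from growing finite pieces around the anchor vertex $v$. Fix $\eps>0$. By the definition of anchored expansion there is $n_0=n_0(\eps)$ such that every finite connected $A\ni v$ with $|A|\geq n_0$ satisfies $|\partial_\textrm{E} A|/\Vol(A)\geq \alpha-\eps/2$. Call a finite connected $B\subset V(G)$ \emph{bad} if $v\notin B$ and $|\partial_\textrm{E}^G B|/\Vol_G(B)<\alpha-\eps$.

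The key local observation is that removing a bad subset from a set of good expansion preserves the good expansion. Precisely, if $A$ is finite with $v\in A$ and $|\partial_\textrm{E}^G A|\geq(\alpha-\eps/2)\Vol_G(A)$, and $B\subset A$ is bad, then the identity
\[
|\partial_\textrm{E}^G(A\setminus B)|=|\partial_\textrm{E}^G A|+|\partial_\textrm{E}^G B|-2|E(B,V\setminus A)|,
\]
combined with $|E(B,V\setminus A)|\leq|\partial_\textrm{E}^G B|$, the badness bound $|\partial_\textrm{E}^G B|<(\alpha-\eps)\Vol_G(B)$, and the trivial inequality $\alpha-\eps/2\leq 1$, rearranges to $|\partial_\textrm{E}^G(A\setminus B)|\geq(\alpha-\eps/2)\Vol_G(A\setminus B)$.

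Armed with this lemma, for each $n\geq n_0$ choose a connected $A_n\ni v$ with $|A_n|\geq n$ (for instance the ball of radius $n$ around $v$). Iteratively remove bad subsets from the current set, taking the connected component of $v$ at each stage; since $A_n$ is finite the procedure terminates and produces a set $H_n$ that has global expansion at least $\alpha-\eps/2$ in $G$ and contains no bad subset avoiding $v$. A standard diagonalization (using bounded degree to extract a subsequence along which $H_n\cap B_R(v)$ stabilizes for every $R$) yields a limit set $V_\infty\ni v$, with the property that every finite connected $B\subseteq V_\infty$ avoiding $v$ satisfies $|\partial_\textrm{E}^G B|/\Vol_G(B)\geq\alpha-\eps$.

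The main obstacle I foresee is twofold. First, one must show that $|H_n|\to\infty$ so that $V_\infty$ is actually infinite; the strategy here is to suppose $V_\infty$ is finite, whereupon for large $n$ the set $A_n\setminus V_\infty$ would be an arbitrarily large union of bad sets attached to a bounded region, contradicting either the global expansion bound on $H_n$ or the fact that a bad set's volume is controlled by its boundary. Second, one must translate $|\partial_\textrm{E}^G B|/\Vol_G(B)\geq\alpha-\eps$ into the intrinsic edge Cheeger $|\partial_\textrm{E}^H B|/\Vol_H(B)$ on the induced subgraph $H=G[V_\infty]$, since edges from $B$ to $V(G)\setminus V_\infty$ are subtracted from both quantities and can decrease the ratio. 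This is remedied by a second round of trimming that controls the outgoing degree of $V_\infty$ uniformly in $B$, or equivalently by absorbing an additional $\eps$ of loss into the target constant and restarting the procedure with $\eps/2$ in place of $\eps$, yielding $\iota_\textrm{E}(H)\geq\alpha-\eps$ as required.
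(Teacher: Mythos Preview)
The paper does not give a proof of this proposition; it is simply quoted as Proposition~3.3 of Vir\'ag \cite{Virag}, so there is no ``paper's own proof'' to compare against. What can be said is that your outline follows Vir\'ag's strategy: iteratively delete finite subsets with small relative boundary (``bad'' sets) and take a limit.

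Your trimming identity is correct, and the deduction that removing a bad $B\subset A$ preserves the lower bound $(\alpha-\eps/2)$ on $|\partial_\textrm{E}^G(\cdot)|/\Vol_G(\cdot)$ goes through (the inequality $\alpha-\eps/2\le 1$ is not actually needed; the computation $(\alpha-\eps/2)\Vol_G(A)-(\alpha-\eps)\Vol_G(B)\ge(\alpha-\eps/2)\Vol_G(A\setminus B)$ is immediate from $\Vol_G(A)=\Vol_G(A\setminus B)+\Vol_G(B)$).

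The genuine gap is your second ``obstacle'', which you do not resolve. Writing $x=|E(B,V\setminus V_\infty)|$, one has
\[
\frac{|\partial_\textrm{E}^H B|}{\Vol_H(B)}=\frac{|\partial_\textrm{E}^G B|-x}{\Vol_G(B)-x},
\]
and since $|\partial_\textrm{E}^G B|\le\Vol_G(B)$, subtracting the same $x$ from numerator and denominator can push the ratio arbitrarily close to~$0$. Saying ``absorb another $\eps$'' or ``do a second round of trimming'' does not address this: the loss is not uniformly bounded by any fixed multiple of $\eps$, because $x$ can be a fixed fraction of $\Vol_G(B)$. Vir\'ag handles this by organizing the deletion differently---he removes a single \emph{maximal} bad (``isolated'') set rather than iterating on arbitrary bad pieces, and shows that after this one removal the remaining graph has no bad subsets \emph{in the intrinsic sense}; the argument uses that a union of overlapping bad sets is again bad, so maximality is well-posed. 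Your sketch would need a comparable mechanism to control the edges lost to $V\setminus V_\infty$, and as written it does not supply one.
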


\begin{proof}[Proof of \cref{T:transience}]
  Let $\eps$ be some small number to be determined later, and let $G_\eps$
  denote the set of $\eps$-good vertices (and also the induced subgraph).
  For any set $F$ with boundary $\partial F$, the $\eps$-bad vertices in
  $\partial F$ are a union of two Bernoulli percolations, each of which is
  exponentially unlikely to have size greater than than $\frac14|\partial
  F|$, provided that $\delta<\frac14$. Specifically,
  \[
  \P\left(|G_\eps \cap \partial F| \leq \frac12 |\partial F| \right) \leq
  e^{-c|\partial F|} \leq e^{-c \iota |F|},
  \]
  where $c=c(\delta)$ tends to $\infty$ as $\delta\to 0$, and $\iota>0$ is
  the Cheeger constant of the graph. The number of connected sets $F$ of
  size $n$ containing the fixed origin $o$, is at most $2^{Kn}$ (this is
  true in any graph with degree bound $K$ --- the maximum is easily seen to
  happen on a $K$-regular tree, on which the set can be identified by its
  boundary which is just $Kn$ choices of whether to go up or down). Thus if
  $\delta$ is small, so that $c(\delta) > K(\log 2)/\iota$, then a.s.\ only
  finitely many such sets $F$ have bad vertices for half their boundary.

  Taking such a $\delta$, we find that $G_\eps$ contains an infinite cluster
  $\CC$ which has ``vertex anchored expansion'' at least $\iota(G)/2$.
  Moving to edge anchored expansion loses a $K^2$ and we get $\alpha\ge
  \iota/2K^2$. By \cref{P:anchored} we find that $\CC$ contains a subgraph
  $H$ with $\iota_\textrm{E}(H)>\iota(G)/3K^2$.

  By \cref{T:kesten}, the simple random walk on $H$ has exponentially
  decaying return probabilities. However, since all vertices of $H$ are
  $\eps$-good, edges incident on vertices of $H$ have weights within
  $(1-\eps)^{-2}$ of equal.  Thus the random walk on the weighted graph
  $(H,\bW)$ is close to the simple random walk on $H$.  Specifically,
  letting $p^\bW$ denote the heat kernel for the $\bW$-weighted random walk
  restricted to $H$, then we find
  \[
  p^\bW_n(0,0) \leq (1-\eps)^{-2n} p_n(0,0) \leq C (1-\eps)^{-2n} e^{-\beta
    n},
  \]
  where $\beta$ is some constant depending only on $G$.  In particular for
  $\eps$ small enough, these return probabilities are summable and the walk
  is transient.

  Finally by Rayleigh monotonicity (see e.g.\ \cite{Doyle}), $(G,\bW)$ is
  transient as well, completing the proof.
\end{proof}

Since there are several parameters that need to be set in the proof, let us
summarize the final order in which they must be chosen.  The graph $G$
determines $K$ and $\iota(G)$.  Then $\delta$ is chosen to get large enough
anchored expansion. This determines via \cref{T:kesten} the value of
$\beta$ for the subgraph $H$, which determines how small $\eps$ needs to
be.  Finally, given $\eps$ and $\delta$ we take $L$ large enough to satisfy
\cref{L:faithful,L:balanced} and the minimal $a$ is determined from
\cref{L:balanced}.

\section{Vertex Reinforced Jump Process}
\label{S:VRJP}

In this section we apply our basic methods to the VRJP models mentioned in
the introduction. This demonstrates the flexibility of the approach and
gives a second proof of recurrence of LRRW based on the embedding of LRRW
in VRJP with initial rates $\bJ$ i.i.d.\ with marginal distribution
$\Gamma(a, 1)$.

\subsection{Times they are a changin'}

As explained in the introduction, the VRJP has a dynamic and an RWRE
descriptions, related by a time change. Let us give the details. The
dynamic version we will denote by $Y_t$, and its local time at a vertex $x$
by $L_x(t)$, so that $t = \sum_x L_x(t)$. Recall that $Y_t$ moves from $x$
to $y$ with rate $J_{x,y}(1+L_y(t))$.

The RWRE picture is defined in terms of a positive function $\bW =
(W_v)_{v\in V}$ on the \textit{vertices}. Given $\bW$ we will denote by
$Z_s=Z_s^\bW$ the random walk in continuous time that jumps from $x$ to $y$
with rate $\frac 12 J_{xy}W_y/W_x$. We will denote the local time of $Z$ by
$M_x(s)$, and again $s=\sum_x M_x(s)$. When some random choice of $\bW$ is
clear from the context we will denote by $Z_s$ the mixed process.

The time change relating $s$ and $t$ is then given by the relation that
$M_x = L_x^2+2L_x$, or equivalently that $L_x = \sqrt{1+M_x} - 1$. Summing
over all vertices gives a relation between $s$ and $t$. Since the local
times are only increasing at the presently occupied vertex, this gives the
equivalent relations $ds = 2(1+L_{Y_t}(t)) dt$ and $dt =
ds/2\sqrt{1+M_{Z_s}(s)}$.

\begin{theorem}[\cite{ST}]\label{T:ST}
  On any finite graph there exists a random environment $\bW$ so that
  $(Z_s)$ is the time change of $(Y_t)$ given above.
\end{theorem}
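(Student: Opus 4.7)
The plan is to follow the Sabot-Tarrès strategy, which has two main stages: first, establish a form of partial exchangeability for the time-changed process $Z_s$ on a finite graph, and second, extract the random environment $\bW$ by a de Finetti/Diaconis-Freedman style mixture argument, or equivalently by identifying $W_v$ as a functional of a limiting ratio of local times.

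For the exchangeability step, the key is to write down the density of a finite trajectory of the VRJP. A trajectory of $Y_t$ up to a fixed time $T$ is specified by an edge sequence $e_1,\dots,e_n$ together with holding times $h_1,\dots,h_n$ summing to $T$; its Radon-Nikodym density with respect to the natural reference measure on such marked paths is the standard product of exit rates and exponential waiting factors, all expressible in terms of the initial rates $J_e$ and the local times $(L_v)$ accumulated along the path. After performing the change of variables dictated by $ds = 2(1+L_{Y_t}(t))\,dt$, I would reorganize the density for $Z_s$ and check that it factors into a product of two kinds of terms: one depending only on the edge-crossing counts and the initial rates $J_e$, and one symmetric in the order of jumps and depending only on the terminal local times $(M_v(s))$. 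This is the partial exchangeability statement for $Z_s$: two trajectories with the same edge-crossing profile and the same final local times have the same density.

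The second stage extracts $\bW$. Having established a representation in which the trajectory's law factors through a symmetric functional of the local times, one can invoke a continuous-time analog of the Diaconis-Freedman mixture-of-Markov-chains theorem: on a finite graph the process is automatically recurrent, so by partial exchangeability the ratios $M_v(s)/M_w(s)$ along neighbouring pairs converge a.s.\ as $s\to\infty$ to limits that can be consistently written as $W_v/W_w$ after fixing a normalization $W_{v_0}=1$. Reversibility of the limiting jump kernel $\tfrac12 J_{xy}W_y/W_x$ then follows from the fact that these ratios come from local-time frequencies of a recurrent jump process, and the environment $\bW$ so defined gives the required representation of $Z_s$ as a mixture of continuous-time Markov chains.

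The principal obstacle is carrying out the density computation cleanly enough to make the exchangeability manifest. The time change introduces Jacobian factors that interact nontrivially with the exponential waiting factors, and the algebra only collapses into a function symmetric in the jumps once the identity $M_v=L_v^2+2L_v$ (equivalently $1+L_v=\sqrt{1+M_v}$) is used to convert the $Y$-time ingredients into quantities natural in $s$-time. After that, recovering $\bW$ as an a.s.\ limit and verifying reversibility are routine on a finite graph; no infinite-volume issues arise at this stage.
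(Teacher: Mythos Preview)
The paper does not prove this theorem at all: it is quoted from Sabot and Tarr\`es \cite{ST} and used as a black box. Immediately after the statement the authors give a notation dictionary between their paper and \cite{ST} and remark that Sabot and Tarr\`es in fact obtain an explicit formula for the law of $\bW$, but that only existence is needed here. So there is nothing in the paper to compare your proposal against.

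As for the proposal itself, your outline is in the right spirit but is not quite how \cite{ST} proceed, and the second stage as you describe it has a gap. Sabot and Tarr\`es do compute the density of a VRJP trajectory and perform the time change $M_v=L_v^2+2L_v$; the point is that the resulting density factors explicitly into a term depending only on the discrete path and a term depending only on the terminal local times, from which they read off the mixing measure directly (and identify it with the supersymmetric hyperbolic $\sigma$-model). What they do \emph{not} do is appeal to an abstract ``continuous-time Diaconis--Freedman theorem'': no such off-the-shelf result is invoked, and your plan to extract $\bW$ as an a.s.\ limit of local-time ratios, while plausible on a finite graph, would still require you to identify the limiting chain's transition rates and check they have the specific form $\tfrac12 J_{xy}W_y/W_x$ rather than some other reversible rates. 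That identification is exactly what the explicit density computation delivers, and without it your argument is incomplete. If you want to carry out your sketch rigorously you will end up doing the density calculation anyway; the de~Finetti layer is then superfluous.
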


For the convenience of the reader, here is a little table comparing our
notation with those of \cite{ST}:

\begin{center}
\begin{tabular}{|l||c|c|c|}
\hline 
Here & $J$ & $W$ & $L$ \\
\hline
\cite{ST} & $W$ & \rule{0pt}{12pt}$e^U$ & $L-1$ \\
\hline
\end{tabular}
\end{center}

\noindent In fact, Sabot and Tarr\`es also give an explicit formula for the
law of the environment $\bW$. However, as with the LRRW, we do not require
the formula for this law, but only that it exists.

One more way to think of all this is that the process has two clocks
measuring the occupation time at each vertex. From the VRJP viewpoint, the
process jumps from $i$ to $j$ at rate $J_{ij}(1+L_j(t)) dt$ (i.e.\ with
respect to the $L$'s). From the RWRE viewpoint, the process jumps at rate
$\frac12 J_{ij} W_j/W_i ds$. \cref{T:ST} states that the above
relation between $ds$ and $dt$, these two descriptions give the same law
for the trajectories. 

It is interesting to also describe the process in terms of both the local
times and reinforcement. Using the time of the $L_x$s, and given the
environment, we find that the process jumps from $i$ to $j$ at rate $J_{ij}
(W_j/W_i)(1+L_i) dt$. Note that here it is $L_i$ and not $L_j$ that
controls the jump rates. Similarly, we can get a reinforcement description
for $Z$: the process jumps from $i$ to $j$ at rate $J_{ij}
\sqrt{\rule{0pt}{8pt}\smash{(1+M_j)/(1+M_i)}} ds$. This gives a
description of $Z$ with no random environment, but using reinforcement.
Nevertheless, it will be most convenient to use $Z$ for the RWRE side of
the proof and $Y$ for the dynamical part.

A final observation is that conditional on $\bJ$ and $\bW$, the
discretization of $Z_s$ (i.e.\ the sequence of vertices visited by the
process) has precisely the same law as the random walk on $G$ with
conductances given by $C_{ij}=J_{ij} W_i W_j$.

\subsection{Guessing the environment}

As with the LRRW, the main idea is to extract from the processes some
estimate for the environment, and show that it is reasonably close to the
actual environment on the one hand, and behaves well on the other.

For neighboring vertices $i,j$, let $S_{ij}$ be the first time at which
$Z$ jumps from $i$ to $j$, and let $\tau_{ij} = M_i(S_{ij})$ be the local
time for $Z$ at $i$ up to that time. Given the environment $\bW$, we have
that $\tau_{ij} \eqd \Exp\left(\frac12 J_{ij} W_j/W_i\right)$. Thus we can use
$Q_{ij} := \sqrt{\tau_{ji}/\tau_{ij}}$ as an estimator for
$R_{ij} = W_j/W_i$. 

For a vertex $v$, we consider a random simple path $\gamma$ from $v_0$ to
$v$, where each vertex $x$ is preceded by the vertex from which $x$ is
entered for the first time. This $\gamma$ is just the backward loop erasure
of the process up to the hitting time of $v$. As for the LRRW, we need two
estimates. First an analogue of \cref{L:RUB}:

\begin{lemma}\label{L:R1UB}
  For any simple path $\gamma$ in a finite graph $G$, any environment $\bW$
  and any $s<1$ we have
  \[
  \E^\bW \prod_{e\in\gamma} \left(\frac{R_e}{Q_e}\right)^{2s} =
  \left(\frac{\pi s}{\sin\pi s}\right)^{|\gamma|}.
  \]
\end{lemma}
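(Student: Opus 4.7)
The plan is to realise the estimators $Q_e$ in terms of independent unit exponentials via the standard Poisson construction of $Z$, so that the whole lemma reduces to a one-line moment identity for $\Exp(1)$ variables.

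First I would fix the environment $\bW$ and construct $Z$ in the following way: for each directed edge $(i,j)$, attach an independent homogeneous Poisson process $N_{ij}$ of rate $\lambda_{ij}:=\tfrac12 J_{ij}W_j/W_i$ to the local-time axis of $i$, and declare that $Z$ jumps from $i$ to $j$ precisely at those instants at which $M_i$ reaches an atom of $N_{ij}$. Under $\P^{\bW}$, the variable $\tau_{ij}$ is then simply the first atom of $N_{ij}$, so $\tau_{ij}\sim\Exp(\lambda_{ij})$. Because $\gamma$ is a \emph{simple} path, the $2|\gamma|$ variables $\{\tau_{ij},\tau_{ji}\colon e=\langle i,j\rangle\in\gamma\}$ are read off from $2|\gamma|$ distinct Poisson processes and are therefore mutually independent.

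Next I would do the single-edge computation. Since $J_{ij}=J_{ji}$, one has $R_e^{2}=W_j^{2}/W_i^{2}=\lambda_{ij}/\lambda_{ji}$. Writing $U:=\lambda_{ij}\tau_{ij}$ and $V:=\lambda_{ji}\tau_{ji}$, which are i.i.d.\ $\Exp(1)$ under $\P^{\bW}$,
\[
\left(\frac{R_e}{Q_e}\right)^{2s}=\left(\frac{\lambda_{ij}}{\lambda_{ji}}\right)^{s}\left(\frac{\tau_{ij}}{\tau_{ji}}\right)^{s}=\left(\frac{U}{V}\right)^{s}.
\]
Using $\E U^{s}=\Gamma(1+s)$ and $\E V^{-s}=\Gamma(1-s)$ (finite because $s<1$), Euler's reflection formula yields
\[
\E^{\bW}\!\left(\frac{R_e}{Q_e}\right)^{2s}=\Gamma(1+s)\Gamma(1-s)=\frac{\pi s}{\sin\pi s}.
\]

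Finally, independence of the pairs $(\tau_{ij},\tau_{ji})$ across distinct edges of $\gamma$ -- guaranteed by the Poisson construction and the simplicity of $\gamma$ -- lets me factor the expectation of the product into the product of expectations, giving the exponent $|\gamma|$. The only genuine subtlety is bookkeeping the independence at the setup stage; after that the identity is essentially immediate, which also explains why the constant is \emph{exactly} $\pi s/\sin\pi s$ with no graph-dependent factors.
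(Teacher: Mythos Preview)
Your proof is correct and is essentially the paper's own argument: both write $\tau_{ij}=(1/\lambda_{ij})X_{ij}$ with $X_{ij}$ i.i.d.\ $\Exp(1)$ under $\P^{\bW}$, observe that $R_e/Q_e=\sqrt{X_{ij}/X_{ji}}$, and invoke $\Gamma(1+s)\Gamma(1-s)=\pi s/\sin\pi s$. Your Poisson-clock construction simply makes explicit the independence the paper asserts in one line; note, incidentally, that the $\tau_{ij}$ are independent across \emph{all} directed edges by that construction, and simplicity of $\gamma$ is only used so that the product $\prod_{e\in\gamma}$ contains no repeated factor.
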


Second, we need an analogue of \cref{L:AUB}. Recall from \cref{sec:proof}
that $\mathscr D_{\gamma}$ denotes the event that the backward loop erasure
from $v$ is a given path $\gamma$. We use the same notation here.

\begin{lemma}\label{L:A1UB}
  There exists some $C>0$ such that for any $0<s<\nicefrac14$ the
  following holds.  For any finite graph $G$, any conductances $\bJ$, and
  simple path $\gamma$ starting at $v_0$,
  \[
  \E \prod_{e\in\gamma} Q_e^{2s} \mathbf 1\{\DD_\gamma\}
  \le C(s)^{|\gamma|} \prod_{e\in\gamma} J_e^{2s}.
  \]
\end{lemma}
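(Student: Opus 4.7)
The plan is to mirror the proof of \cref{L:AUB}: using the dynamic (VRJP) description of the process, I will construct, for each edge $e_m=(v_{m-1},v_m)$ of $\gamma$, a random variable $\bar Q_m$ that dominates $Q_{e_m}$ on $\DD_\gamma$, and is built from data that is independent across $m$. Note that one cannot simply use \cref{L:R1UB}: conditioning on $\bW$ and dropping the indicator gives
\[
\E\Big[\textstyle\prod_m Q_{e_m}^{2s}\mathbf 1\{\DD_\gamma\}\,\Big|\,\bW\Big]\le\left(\frac{\pi s}{\sin\pi s}\right)^{|\gamma|}\prod_m R_{e_m}^{2s},
\]
but integrating $\bW$ out would require bounds on moments of $W_v/W_{v_0}$ that the tightness from \cref{T:DF} does not supply. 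The event $\DD_\gamma$ must therefore be exploited directly.

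First I would realize $Y$ via the standard Poisson-clock construction: for each directed edge $(i,j)$, let $\xi_{ij}\sim\Exp(1)$ be i.i.d., and declare the first $i\to j$ jump of $Y$ to occur at the wall-clock time when $\int_0^t \mathbf 1\{Y_u=i\}J_{ij}(1+L_j(u))\,du$ first reaches $\xi_{ij}$. Let $\bar\ell_m$ and $\tilde\ell_m$ be the values of $L_{v_{m-1}}$ and $L_{v_m}$ at the moments of the first $v_{m-1}\to v_m$ and $v_m\to v_{m-1}$ jumps, respectively; the time change $M_x=L_x^2+2L_x$ gives $\tau_{v_{m-1},v_m}=\bar\ell_m(\bar\ell_m+2)$ and $\tau_{v_m,v_{m-1}}=\tilde\ell_m(\tilde\ell_m+2)$. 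On $\DD_\gamma$ the first entry to $v_m$ is through $e_m$, so $L_{v_m}\equiv 0$ beforehand and the defining integral reduces to $J_{e_m}\bar\ell_m$, yielding the identity $\bar\ell_m=\xi_{v_{m-1},v_m}/J_{e_m}$. For $\tilde\ell_m$, using that $L_{v_{m-1}}$ is non-decreasing and at least $\bar\ell_m$ from this first entry onwards, the $v_m\to v_{m-1}$ clock at $v_m$ has rate at least $J_{e_m}(1+\bar\ell_m)$ throughout, and hence $\tilde\ell_m\le \xi_{v_m,v_{m-1}}/(J_{e_m}+\xi_{v_{m-1},v_m})$.

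Setting $\alpha_m=\xi_{v_m,v_{m-1}}$ and $\beta_m=\xi_{v_{m-1},v_m}$ (all independent $\Exp(1)$ across $m$ since the edges of $\gamma$ are distinct), I define
\[
\bar Q_m^2 \;:=\; \frac{\alpha_m\bigl(\alpha_m+2(J_{e_m}+\beta_m)\bigr)\,J_{e_m}^2}{\beta_m(\beta_m+2J_{e_m})(J_{e_m}+\beta_m)^2}.
\]
Monotonicity of $x\mapsto x(x+2)$ then gives $Q_{e_m}\le \bar Q_m$ on $\DD_\gamma$, and independence across edges factorizes the expectation into $\prod_m\E[\bar Q_m^{2s}]$. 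The hard part is the uniform-in-$J$ moment bound $\E[\bar Q_m^{2s}]\le C(s)J_{e_m}^{2s}$. Pulling out the factor $J_{e_m}^{2s}$ and using subadditivity to expand $(\alpha+2J+2\beta)^s\le \alpha^s+(2J)^s+(2\beta)^s$, the remaining expectation splits into three pieces. Lower-bounding denominator factors by $(J+\beta)^{2s}\ge\beta^{2s}$, $(J+\beta)^{2s}\ge 4^s(J\beta)^s$, or $(\beta+2J)^s\ge \beta^s$ as convenient in each piece, every summand reduces to $C\,\E[\alpha^p\beta^{-q}]$ for suitable $p,q$. The binding constraint comes from the $\alpha^s$ part of the expansion, which produces $\alpha^{2s}/\beta^{4s}$ with expectation $\Gamma(1+2s)\Gamma(1-4s)$, finite exactly when $s<\nicefrac14$; this is precisely where the hypothesis on $s$ is used.
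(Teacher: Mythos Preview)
Your proof is correct and follows essentially the same route as the paper. The paper phrases the coupling as $U:=L_i(T_{ij})\sim\Exp(J_{ij})$ and $V:=L_j(T_{ji})$ stochastically dominated by $\Exp(J_{ij}(1+U))$, independently across edges of $\gamma$; after the reparametrization $U=\beta/J$, $V\le\alpha/(J+\beta)$ with $\alpha,\beta\sim\Exp(1)$ this is exactly your $\bar\ell_m,\tilde\ell_m$ and your $\bar Q_m^2$ coincides with the paper's $(2V+V^2)/(2U+U^2)$. The only cosmetic difference is in the moment bound: the paper isolates it as a separate lemma and uses the cruder algebraic inequality $(2V+V^2)/(2U+U^2)<2J^2\alpha/\beta^3+J^2\alpha^2/\beta^4$ before taking moments, whereas you expand via subadditivity of $x\mapsto x^s$; both routes hit the same $\E\beta^{-4s}=\Gamma(1-4s)$ term that forces $s<\nicefrac14$.
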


\begin{proof}[Proof of \cref{L:R1UB}]
  Given the environment, the time $Z$ spends at $i$ before jumping to $j$ is
  $\Exp\left(\frac12J_{ij}W_j/W_i\right)$ which may be written as
  $(2W_i/J_{ij}W_j) X_{ij}$, where $X_{ij} \eqd \Exp(1)$. Crucially,
  given the environment the variables $X_{ij}$ are all independent. For an
  edge $e=(i,j)$ this gives $R_e/Q_e =
  \sqrt{X_{ij}/X_{ji}}$. Therefore
  \[
  \E^\bW \prod_{e\in\gamma} \left(\frac{R_e}{Q_e}\right)^{2s}
  = \Big(\Gamma(1+s)\Gamma(1-s) \Big)^{|\gamma|}
  = \left(\frac{\pi s}{\sin\pi s}\right)^{|\gamma|},
  \]
  by the reflection identity for the $\Gamma$ function.
\end{proof}

\begin{lemma}\label{L:moments}
  Suppose $0<s<\nicefrac14$, and let $J>0$ be fixed. Let $U \eqd \Exp(J)$ and
  conditioned on $U$, let $V \eqd \Exp(J(1+U))$. Then
  \[
  \E \left( \frac{2V+V^2}{2U+U^2} \right)^s \leq \frac{C}{1-4s} J^{2s},
  \]
  where $C$ is some universal constant.
\end{lemma}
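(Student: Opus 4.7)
The plan is to make the exponentials explicit via a substitution and reduce the claim to a standard Gamma-function estimate. Write $U = A/J$ and $V = B/(J+A)$ where $A,B$ are iid $\Exp(1)$; this matches the stated joint law, since $U \eqd \Exp(J)$ and, conditional on $A$, we have $J(1+U)V = B \eqd \Exp(1)$. A direct computation then gives
\[
\frac{2V+V^2}{2U+U^2}
= \frac{J^2\, B\,(2J+2A+B)}{(J+A)^2\, A\,(2J+A)},
\]
so the claim reduces to the $J$-uniform inequality
\[
E := \E\left[\frac{B^s\,(2J+2A+B)^s}{(J+A)^{2s}\, A^s\, (2J+A)^s}\right]
\leq \frac{C}{1-4s}.
\]
The factor $J^{2s}$ in the conclusion then comes for free from the substitution.

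To bound $E$, I would use the subadditivity $(x+y)^s \leq x^s + y^s$ for $s \in (0,1)$ to split
\[
(2J+2A+B)^s \leq 2^s(J+A)^s + B^s,
\]
which decomposes $E$ into two pieces. In the first piece the factor $(J+A)^s$ cancels part of the denominator; independence of $A$ and $B$ then lets me integrate out $B$ using $\E B^s = \Gamma(1+s)$ and $\E B^{2s} = \Gamma(1+2s)$, both uniformly bounded for $s \in (0,\tfrac14)$. What remains are two expectations in $A$ whose integrands, via the trivial inequalities $J+A \geq A$ and $2J+A \geq A$, are dominated by $A^{-3s}$ and $A^{-4s}$ respectively. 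These integrate against $e^{-a}\,da$ to yield $\Gamma(1-3s)$ and $\Gamma(1-4s)$.

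The constraint $s<\tfrac14$ enters exactly at the integrability of $A^{-4s}$ near the origin; since $\Gamma(x) \sim 1/x$ as $x \downarrow 0$ we get $\Gamma(1-4s) \leq C/(1-4s)$, which accounts for the divergent factor in the statement. There is no real obstacle: the argument is short and routine. The only mild inefficiency is that the crude bounds $J+A\geq A$, $2J+A\geq A$ discard $J$-dependence entirely in the residual $A$-integrals, but since only a universal bound is needed this is harmless and the dominant contribution to $E$ comes from the regime $A \ll J$ in any case.
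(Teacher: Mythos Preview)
Your proof is correct and follows essentially the same route as the paper: the same reparametrization $U=A/J$, $V=B/(J+A)$ with independent standard exponentials, the same subadditivity $(x+y)^s\le x^s+y^s$, the same crude denominator bounds $J+A\ge A$, $2J+A\ge A$, and the same Gamma-function endgame producing $\Gamma(1-3s)$ and $\Gamma(1-4s)$. The only cosmetic difference is that the paper first bounds the fraction pointwise by $2J^2 B/A^3 + J^2 B^2/A^4$ and then applies subadditivity to the $s$-th power, whereas you apply subadditivity to the factor $(2J+2A+B)^s$ first and bound the $A$-integrals afterward; the resulting estimates coincide.
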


\begin{proof}
  We can reparametrize $U=\frac{X}{J}$ and $V=\frac{Y}{X+J}$, where $X$ and
  $Y$ are independent $\Exp(1)$ random variables. In term of $X,Y$ we have
  \[
  \frac{2V+V^2}{2U+U^2}
  = \frac{2J^2 Y(J+X)+J^2Y^2}{X(J+X)^2(2J+X)}
  < \frac{2J^2Y}{X^3} + \frac{J^2Y^2}{X^4}.
  \]
  We now calculate, using $(a+b)^s \leq a^s+b^s$ for $0<s<1$,
  \begin{align*}
    \E \left( \frac{2J^2Y}{X^3} + \frac{J^2Y^2}{X^4} \right)^s
    & \le J^{2s} \left(
      \E\big(2Y X^{-3} \big)^s + \E\big(Y^2 X^{-4} \big)^s \right) \\
    &= J^{2s} \left( 2^s\E Y^s \E X^{-3s} +  \E Y^{2s} \E X^{-4s}
    \right)
    &&\text{by independence} \\
    &\le \frac{C}{1-4s} J^{2s}
    &&\text{since $\E X^a\le\frac{C}{1+a}$.}
  \end{align*}
  (the inequality $\E X^a\le C/(1+a)$ holds for $|a|<1$, the relevant range
  here). 
\end{proof}

\begin{proof}[Proof of \cref{L:A1UB}]
  Consider an edge $e=(i,j)\in\gamma$, and let $T_{ij}$ be the first time
  $t$ at which $Y_t$ jumps from $i$ to $j$ (and similarly define $T_{ji}$).
  On the event $\DD_\gamma$ the process does not visit $j$ before the first
  jump from $i$ to $j$. Thus $L_j(t)=0$ for all $t<T_{ij}$. Hence the jump
  $i\to j$ occurs at rate $J_{ij}$ whenever $Y$ is at $i$, and so $U :=
  L_i(T_{ij})$ has law $\Exp(J_{ij})$. More precisely, the statement about
  the jump rate implies that we can couple the process $Y$ with an
  $\Exp(J_{ij})$ random variable, so that on the event $\DD_\gamma$ it
  equals $U$.

  Let $V = L_j(T_{ji})$ be the time spent at $j$ before the first jump back
  to $i$. Since $L_i(t) \geq U$ from the time we first enter $j$, the rate
  of such jumps is always at least $J_{ij}(1+U)$, we find that $V$ is
  stochastically dominated by a $\Exp(J_{ij}(1+U))$ random variable.

  All statements above concerning rates of jumps along the edge $e$ hold
  (on the event $\DD_\gamma$), uniformly in anything that the process does
  anywhere else. Thus it is possible to construct such exponential random
  variables for every edge $e\in\gamma$, independent of all other edges, so
  that the $U$ equals the first and $V$ is dominated by the second.
  The claim then follows by \cref{L:moments}, since $\tau_{ij}=2U+U^2$ and
  $\tau_{ji} = 2V+V^2$ by their definitions and the time change formulae.
\end{proof}

\subsection{Exponential decay and \texorpdfstring{\cref{T:VRJP}}{Theorem \ref{T:VRJP}}}

Let $G_R$ denote the ball of radius $R$ around $v_0$. Denote by $\mu^{(R)}$
the VRJP measure on $G_R$ and the corresponding expectation by
$\E^{(R)}$. In the proof, $\E_{\bJ}$ denote expectation with respect to $
\bJ$.  \cref{T:VRJP} follows from the following:

\begin{theorem}
  There is a universal constant $c>0$ such that the following holds. Let
  $G$ be a fixed graph with degree bound $K$. Let $\bJ= (J_{e})_{e \in E}$
  be a family of independent initial rates with
  \[
  \E J_e^{1/5} < c K^{-4}.
  \]
  Then (a.s.\ with respect to $\bJ$) the measures $\mu^{(R)}$ are a tight
  family and converge to a limit $\mu$ on $\R_+^E$ so that the VRJP is a
  time change of the process $Z_s$ in the environment given by $\mu$. The
  limit process is positive recurrent, and the stationary measure decays
  exponentially.
\end{theorem}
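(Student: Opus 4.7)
The proof will mirror that of \cref{T:recurrence,T:ExpDecay} for LRRW. I will work first on the finite balls $G_R$, where \cref{T:ST} provides an environment $\bW$ on vertices realizing $Z_s$ as a time change of $Y_t$, with law $\mu^{(R)}$. The core task is to establish a bound on $\E_\bJ \E^{(R)} W_v^{1/5}$ that is uniform in $R$ and decays exponentially in $\dist(v_0,v)$. Tightness of $\mu^{(R)}$ together with continuity of the random walk law in the environment will then give a subsequential limit $\mu$ describing the VRJP on all of $G$ through the same time change.

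For the main estimate, normalize $W_{v_0}=1$. For $v\neq v_0$, telescoping along any simple path $\gamma$ from $v_0$ to $v$ gives $W_v=\prod_{e\in\gamma}R_e$; splitting according to the backward loop erasure of the walk,
\[
\E^{(R)} W_v^{1/5} = \sum_{\gamma:\,v_0\to v}\E^{(R)}\Big[\prod_{e\in\gamma} R_e^{1/5}\,\mathbf 1\{\DD_\gamma\}\Big].
\]
For each $\gamma$ I write $R_e^{1/5}=(R_e/Q_e)^{1/5}Q_e^{1/5}$ and apply Cauchy--Schwarz. Taking $s=1/5<1/4$, \cref{L:R1UB} bounds $\E^\bW\prod(R_e/Q_e)^{2/5}$ by $C_1^{|\gamma|}$ uniformly in $\bW$, while \cref{L:A1UB} bounds $\E^{(R)}\prod Q_e^{2/5}\mathbf 1\{\DD_\gamma\}$ by $C_2^{|\gamma|}\prod_e J_e^{2/5}$. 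Combining,
\[
\E^{(R)}\Big[\prod_{e\in\gamma} R_e^{1/5}\mathbf 1\{\DD_\gamma\}\Big]\le C_0^{|\gamma|}\prod_{e\in\gamma} J_e^{1/5}.
\]
Averaging over $\bJ$ using independence collapses the right-hand side to $\prod_e \E J_e^{1/5} \cdot C_0^{|\gamma|}\le (C_0 c K^{-4})^{|\gamma|}$ under the hypothesis. Since there are at most $K^\ell$ simple paths of length $\ell$ starting from $v_0$, for $c$ small enough the sum over $\ell\ge\dist(v_0,v)$ is a convergent geometric series bounded by $\alpha^{\dist(v_0,v)}$ with some fixed $\alpha<1$.

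With this uniform exponential moment bound in hand, tightness of $\mu^{(R)}$ in the product topology is immediate, and any subsequential limit $\mu$ inherits the same bound. Because the finite-time marginals of the VRJP on $G$ restricted to $G_R$ agree with those of the VRJP on $G_R$ (up to the exit time of $G_R$), the limit $\mu$ describes the VRJP on $G$ via the time change of \cref{T:ST}. A Borel--Cantelli argument applied first over $\bJ$ and then, for almost every $\bJ$, over the environment, combined with the boundary-size bound $|\partial B_\ell(v_0)|\le K^{\ell+1}$, shows that $\mu$-a.s.\ $W_v$ decays exponentially in $\dist(v_0,v)$. In particular $\sum_v W_v^2<\infty$ a.s., so the conductance network with $C_{ij}=J_{ij}W_iW_j$ has finite total conductance, the process $Z_s$ is positive recurrent with stationary probability $\pi(v)\propto W_v^2$ decaying exponentially, and inverting the time change transfers positive recurrence to $Y_t$.

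The main obstacle is the joint alignment of moment exponents: \cref{L:A1UB}'s restriction $s<1/4$ forces the factor $\prod J_e^{2s}$, which is integrable against the hypothesis $\E J^{1/5}<\infty$ only when $2s\le 1/5$, essentially dictating the choice $s=1/5$ and the appearance of a $1/5$-moment condition in the statement. Tracking the $K$-dependence carefully through the $K^\ell$ simple paths and $K^{\ell+1}$ boundary sizes then determines the polynomial correction in the hypothesis $\E J_e^{1/5}<cK^{-4}$; optimizing these constants is the routine bookkeeping part of the argument.
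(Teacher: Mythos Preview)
Your approach is essentially the same as the paper's: telescope $W_v$ along the backward loop erasure, split $R_e = (R_e/Q_e)\cdot Q_e$, apply Cauchy--Schwarz, invoke \cref{L:R1UB,L:A1UB} with $s=1/5$, average over $\bJ$, sum over paths, and pass to a limit via tightness. The core argument is right.

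Two small gaps remain. First, the theorem asserts that the $\mu^{(R)}$ \emph{converge}, not merely that they are tight; you only produce subsequential limits. The paper closes this by observing that once positive recurrence is established, the environment can be read off a.s.\ from the long-run jump frequencies of $Z$, so any subsequential limit is determined by the law of the process and hence unique. Second, the implication ``$\sum_v W_v^2<\infty$, so the conductance network $C_{ij}=J_{ij}W_iW_j$ has finite total conductance'' does not follow without controlling the $J_e$'s, which are unbounded random variables. The paper handles this by a separate Borel--Cantelli step using the finite $\tfrac15$-moment of $J_e$ to show $\sum_e J_e K^{-6\dist(e,v_0)}<\infty$. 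Your alternative route via the continuous-time stationary measure $\pi(v)\propto W_v^2$ is fine, but then the claim about finite total conductance is a non sequitur and should be dropped or justified as in the paper. (Also, in your closing paragraph the constraint should read $s\le 1/5$, not $2s\le 1/5$: after Cauchy--Schwarz the factor is $\prod J_e^{s}$, not $\prod J_e^{2s}$.)
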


The moment condition on $J$ is trivially satisfied in the case that all
$J_e$'s are bounded by some sufficiently small $J_0$. The particular
condition comes from specializing to $s=\nicefrac15$, and can be easily
changed by taking other values of $s$ or by using H\"older's inequality in
place of Cauchy-Schwartz in the proof below. The dependence on $K$ may be
similarly improved.

\begin{proof}
  Combining \cref{L:R1UB,L:A1UB} with Cauchy-Schwartz, for any $v$, any radius
  $R>\dist(v_0,v)$ and any path $\gamma:v_0\to v$ in $G_R$ we have (recall
  $W_{v_0}=1$):
  \begin{align*}
    \E^{(R)} W_v^s \mathbf 1\{\DD_\gamma\}
    &\leq
    \bigg(\E^{(R)} \prod_{e\in\gamma} \left(\frac{R_e}{Q_e}\right)^{2s}
    \bigg)^{1/2} 
    \bigg(\E^{(R)} \prod_{e\in\gamma} Q_e^{2s} \mathbf 1\{\DD_\gamma\}
    \bigg)^{1/2} \\
    &\leq 
    C_1^{|\gamma|} \prod_{e\in\gamma} J_e^s
  \end{align*}
  where $C_1$ depends only on $s$.  Let the $c$ from the statement of the
  theorem be $1/C_1(\frac 15)$. Then with $s=\nicefrac15$ we get
  \[
  \E_{\bJ} \E^{(R)} W_v^{1/5} \mathbf 1\{\DD_\gamma\} \leq K^{-4|\gamma|}.
  \]
  Since the number of paths of length $n$ is at most $K^n$ and no path to
  $v$ is shorter than $\dist(v_0,v)$ this implies 
  \[
  \E_{\bJ} \E^{(R)} W_v^{1/5} < 2K^{-3\dist(v_0,v)}.
  \]
  Since this bound is uniform in $R$, the Borel-Cantelli Lemma, applied
  with respect to $\bJ$, implies that the measures $\mu^{(R)}$ are tight and
  that they have subsequential limits $\bJ$. Let $\mu$ be any such
  subsequential limit (we later deduce that $\mu$ is unique). It is easy to
  see that the weak convergence of $\mu^{(R)}$ to $\mu$ implies a convergence
  of $Z^{(R)}$, $Y^{(R)}$ and the time change between them to $Z$, $Y$ and the time
  change between them corresponding to the infinite measure (all
  convergences are along the chosen subsequence). However, from the
  reinforcement viewpoint, $Y_t$ has the same law on all $G_R$ until the
  first time it reaches the boundary of the ball. Thus $\mu$ yields the
  VRJP on the infinite graph $G$.

  As noted above, the discretized $Z_s$ is just a random walk on $G$ with
  conductances $C_{ij} = J_{ij} W_i W_j$. By Markov's inequality, $\P(W_v >
  K^{-3\dist(v_0,v)}) \leq 2K^{-2\dist(v_0,v)}$. By Borel-Cantelli, it
  follows that a.s.\ $W_v\leq K^{-3\dist(v_0,v)}$ for all but finitely many
  $v$, and therefore $C_e \leq J_{ij} K^{-6\dist(e,v_0)}$. However, the
  number of edges at distance $n$ is at most $K^n$ and we assumed $J_e$ has
  a finite $\nicefrac15$ moment so yet another application of the Borel-Cantelli
  Lemma ensures that $\sum_e J_e K^{-6\dist(v_0,v)} < \infty$. Thus $\sum_e
  C_e$ is almost surely finite, and the total weight outside $G_R$ decays
  exponentially. This implies the positive recurrence.

  Finally, since the process is a.s.\ recurrent, $Z$ visits each vertex
  infinitely often, and the environment can be deduced from the observed
  jump frequencies along edges, the subsequential limit $\mu$
  is in fact unique. With tightness, this implies convergence of the $\mu_R$.
\end{proof}

\vfill

\noindent {\sc Omer Angel:}
{\tt angel@math.ubc.ca}\\
\noindent {\sc Nicholas Crawford:}
{\tt nickc@tx.technion.ac.il}\\
\noindent {\sc Gady Kozma:}
{\tt gady.kozma@weizmann.ac.il}


\begin{thebibliography}{99}







\bibitem{D90} B.~Davis, \emph{Reinforced random walk}. Probab. Theory Related Fields 84:2 (1990), 203--229. Available at: \href{http://www.springerlink.com/content/r225515775747338/}{\nolinkurl{springerlink.com}}

\bibitem{DF}
P.~Diaconis and D.~Freedman, \textit{De Finetti's Theorem for Markov Chains.} Ann. Probab., 8:1 (1980), 115--130. Available at: \href{http://www.jstor.org/pss/2243063}{\nolinkurl{jstor.org}}

\bibitem{D84} J.\ Dodziuk, \emph{Difference equations, isoperimetric inequality and transience of certain random walks.} Trans. Amer. Math. Soc. 284:2 (1984), 787--794. Available at: \href{http://www.ams.org/journals/tran/1984-284-02/S0002-9947-1984-0743744-X/home.html}{\nolinkurl{ams.org}}
 
\bibitem{Doyle}
P.~G.~Doyle, J.~L.~Snell,
\textit{Random Walks and Electric Networks.} Carus Mathematical Monographs, 22. Mathematical Association of America, Washington, DC, 1984. Available at: \href{http://arxiv.org/abs/math/0001057}{\nolinkurl{arXiv:math/0001057}}

\bibitem{DS}
M. Disertori and T. Spencer, \textit{Anderson Localization for a Supersymmetric Sigma Model.}
Commun. Math. Phys. 300:3 (2010), 659--671. Available at: \href{http://www.springerlink.com/content/m6583504661746lj/}{\nolinkurl{springerlink.com}}, \href{http://arxiv.org/abs/0910.3325}{\nolinkurl{arXiv:0910.3325}}

\bibitem{DSZ}
M. Disertori, T. Spencer and M. Zirnbauer, \textit{Quasi-diffusion in a 3D Supersymmetric Hyperbolic Sigma Model.}
Commun. Math. Phys. 300:2 (2010), 435--486. Available at: \href{http://www.springerlink.com/content/61xj2m4860u40844/}{\nolinkurl{springerlink.com}} 

\bibitem{E}
K. Efetov, \textit{Supersymmetry in Disorder and Chaos.} Cambridge Univ. Press., 1997 (Paperback — 1999), 467 pp., ISBN-10: 0521663822.

\bibitem{ES02} N. Enriquez and C. Sabot, \emph{Edge oriented reinforced random walks and RWRE}. C. R. Math. Acad. Sci. Paris 335:11 (2002), 941--946. Available from: \href{http://www.sciencedirect.com/science/article/pii/S1631073X02025803}{\nolinkurl{sciencedirect.com}}, \href{http://www.proba.jussieu.fr/pageperso/enriquez/PUB/CRAS.pdf}{\nolinkurl{jussieu.fr/pageperso/enriquez}}



\bibitem{KR}
M.~Keane and S.~Rolles, \textit{Edge-Reinforced Random Walk on Finite Graphs.}
In P. Clement, F. den Hollander, J. van Neerven, and B. de Pagter, editors, Infinite dimensional stochastic analysis, pages 217--234. Koninklijke Nederlandse Akademie van Wetenschappen, 2000. Available at: \href{http://www-m5.ma.tum.de/foswiki/pub/M5/Allgemeines/SilkeRollesPublications/rrw.ps}{\nolinkurl{tum.de}}

 

\bibitem{MW}
  N.~D.~Mermin and H.~Wagner, \textit{Absence of Ferromagnetism or
Antiferromagnetism in One- or Two-Dimensional Isotropic Heisenberg Models}.
Phys. Rev. Lett. 17:22 (1966), 1133--1136. Available at: \href{http://prl.aps.org/abstract/PRL/v17/i22/p1133_1}{\nolinkurl{prl.aps.org}}

\bibitem{MOR}
F.~Merkl, A.~\"{O}ry and S.~Rolles,
{\em The "Magic Formula" for Linearly Edge-reinforced Random Walks.}
Statistica Neerlandica 62:3 (2008), 345--363. Available at: \href{http://onlinelibrary.wiley.com/doi/10.1111/j.1467-9574.2008.00402.x/abstract}{\nolinkurl{wiley.com}}, \href{http://www-m5.ma.tum.de/foswiki/pub/M5/Allgemeines/SilkeRollesPublications/magic-snversion.pdf}{\nolinkurl{tum.de}}

\bibitem{MR1}
F.~Merkl and S.~Rolles,
\textit{Asymptotic Behavior of Edge-reinforced Random Walks.}
Ann. Probab. 35:1 (2007), 115-140. Available at: \href{http://www.jstor.org/stable/25449967}{\nolinkurl{jstor.org}}

\bibitem{MR4}
F.~Merkl and S.~Rolles,
\textit{A Random Environment for Linearly Edge-reinforced Random Walks on Infinite Graphs.}
Probab. Theory and Related Fields, 138:1-2 (2007), 157--176. Available at: \href{http://www.springerlink.com/content/x9786310lml896nr/}{\nolinkurl{springerlink.com}}

\bibitem{MR2}
F.~Merkl and S.~Rolles,
\textit{Recurrence of Edge-reinforced Random Walk on a Two-dimensional Graph.}
The Annals of Probability, 37:5 (2009), 1679--1714. Available at: \href{http://projecteuclid.org/DPubS?service=UI&version=1.0&verb=Display&handle=euclid.aop/1253539854}{\nolinkurl{projecteuclid.org}}, \href{http://arxiv.org/abs/math/0703027}{\nolinkurl{arXiv:math/0703027}}

\bibitem{MR3}
F.~Merkl and S.~Rolles,
\textit{Correlation Inequalities for Edge-Reinforced Random Walk.} Elect. Commun. Probab. 16 (2011), 753--763. Available at: \href{http://ecp.ejpecp.org/article/view/1683}{\nolinkurl{ejpecp.org}}

\bibitem{Pem}
R.~Pemantle,
\textit{Phase Transition in Reinforced Random Walk and RWRE on Trees.}
Ann. Probab. 16:3 (1988), 1229--1241. Available at: \href{http://www.jstor.org/stable/2244119}{\nolinkurl{jstor.org}}

\bibitem{P07}
R.~Pemantle, \emph{A survey of random processes with reinforcement.} Probab. Surveys
4 (2007), 1--79. Available at: \href{http://dx.doi.org/10.1214/07-PS094}{\nolinkurl{DOI:10.1214/07-PS094}}

\bibitem{Sab} C.~Sabot, \textit{Random walks in random Dirichlet environment are transient in dimension $d\geq 3$.} Probab. Theory Related Fields 151:1-2 (2011), 297--317. \href{http://www.springerlink.com/content/24480204un6v607x/}{\nolinkurl{springerlink.com}}, \href{http://arxiv.org/abs/0811.4285}{\nolinkurl{arXiv:0811.4285}}

\bibitem{ST}
C.~Sabot and P.~Tarr\`es,
\textit{Edge-reinforced random walk, Vertex-Reinforced Jump Process and the supersymmetric hyperbolic sigma model.} Preprint (2012). 
\href{http://arxiv.org/abs/1111.3991}{\nolinkurl{arXiv:1111.3991}}

\bibitem{Virag}
B.~Vir\'{a}g, \textit{Anchored Expansion and Random Walk.}
Geom. Func. Anal.
10:6 (2000), 1588--1605. Available at: \href{http://www.springerlink.com/content/4hrja5n52j45t40x/}{\nolinkurl{springerlink.com}}, \href{http://arxiv.org/abs/math/0102199}{\nolinkurl{arXiv:math/0102199}}


\end{thebibliography}
\end{document}